\newtheorem{prop}{Proposition}
\newtheorem{remark}{Remark}
\begin{document}

\title{Adaptivity in Local Kernel Based Methods for Approximating the Action of Linear Operators\thanks{This report was prepared as an account of work sponsored by an agency of the United States Government. Neither the United States Government nor any agency thereof, nor any of their employees, make any warranty, express or implied, or assumes any legal liability or responsibility for the accuracy, completeness, or usefulness of any information, apparatus, product, or process disclosed, or represents that its use would not infringe privately owned rights. Reference herein to any specific commercial product, process, or service by trade name, trademark, manufacturer, or otherwise does not necessarily constitute or imply its endorsement, recommendation, or favoring by the United States Government or any agency thereof. The views and opinions of authors expressed herein do not necessarily state or reflect those of the United States Government or any agency thereof.}
}

\author{Jonah A. Reeger %
\thanks{\textit{Email}: jonah.reeger@afit.edu%
} \\
 Department of Mathematics and Statistics\\
 US Air Force Institute of Technology, 2950 Hobson Way\\
 Wright-Patterson AFB, OH 45433, USA\\
}

\maketitle

\begin{abstract}
Building on the successes of local kernel methods for approximating the solutions to partial differential equations (PDE) and the evaluation of definite integrals (quadrature/cubature), a local estimate of the error in such approximations is developed. This estimate is useful for determining locations in the solution domain where increased node density (equivalently, reduction in the spacing between nodes) can decrease the error in the solution.  An adaptive procedure for adding nodes to the domain for both the approximation of derivatives and the approximate evaluation of definite integrals is described.  This method efficiently computes the error estimate at a set of prescribed points and adds new nodes for approximation where the error is too large.  Computational experiments demonstrate close agreement between the error estimate and actual absolute error in the approximation.  Such methods are necessary or desirable when approximating solutions to PDE (or in the case of quadrature/cubature), where the initial data and subsequent solution (or integrand) exhibit localized features that require significant refinement to resolve and where uniform increases in the density of nodes accross the entire computational domain is not possible or too burdensome.
\end{abstract}

\section{Introduction}
This article concerns the development of an efficiently computable error estimate, and some basic implementations of methods based on the estimate, for adaptively approximating the action of linear operators on a set of sufficiently smooth functions.  Kernel methods are employed, whereby a basis that includes shifts of a chosen conditionally-positive definite function--the kernel--and supplemental polynomial terms is used in the approximation. The ideas behind kernel methods allow for efficient function approximation with high orders of accuracy even in the presence of variable node spacing \cite{BFNF2015b}.  Such methods are also easily generalized to high dimensions and adaptable to a wide variety of domains.

The work here expands on the successes of kernel methods in interpolation, the approximation of solutions to PDEs and the approximate evaluation of definite integrals \cite{JARBFMLW2016,JARBF2017,JAR2020,JAR2022,JGJAR2022,Hardy,Kansa,ASMV06,FHNWW14,JARBF2016} to include ``$h$-adaptivity"--increasing or decreasing local node density to improve accuracy or efficiency.  Adaptivity is necessary or desirable when the initial data and subsequent solution to a PDE exhibits localized features that require increased node density to resolve, and where uniform refinement across the entire domain is not possible or too burdensome.  This is likewise true in quadrature/cubature when the integrand has similarly localized features.

Existing investigations of adaptive kernel methods focus mainly on selection of the shape parameter in positive-definite kernels, which balances accuracy and numerical stability, or mesh refinement based on global Radial Basis Function (RBF) approximations (see, e.g., \cite{GuJung2020,GU2021113036,SARRA200579,ARBF2020,article2,728359}). Nearly all of these focus strictly on the problem of interpolation.  Still, in the context of solving PDE with local kernel methods there has recently been consideration of an $h$-adaptive RBF generated finite differences (RBF-FD) like method for the Poisson equation \cite{9854342}.  The use of local approximations is intended to reduce overall computational complexity and promote numerical stability.

Kernel approximations are advantageous because in the settings of interpolation and differentiation a point cloud is all that is necessary to describe the geometry of the domain; that is, the approximations are truly meshless.  This is in contrast to other classical methods that require a mesh of the domain, often with restrictions on the quality of the mesh.  The absence of requirements to have a mesh of a certain quality opens up opportunities for new, more flexible node refinement strategies--a benefit for $h$-adaptivity.  A recent work considered adaptive numerical differentiation with a basis including only multivariate polynomial terms, but was fixed to discrete Leja points when constructing the interpolant \cite{AdaptivePolynomial}.

The following section \ref{sec:Interpolation} formulates the problem of local kernel approximations and introduces the interpolants that are used in approximation.  Then, section \ref{sec:Error_Estimation} describes the error in approximation using kernel interpolants and an estimate of that error.  This is followed by an introduction to an adaptive algorithm that successfully uses the error estimate to achieve a prescribed tolerance in the local approximation of linear operators in section \ref{sec:Implementation}.  Finally, sections \ref{sec:Experiments} and \ref{sec:Conclusions} provide computational experiments and conclusions, respectively.

\section{Local Kernel Approximations Via Interpolation} \label{sec:Interpolation}

Consider the evaluation of $\mathcal{L}f$, where $\mathcal{L}$ is a linear operator and $f:\mathbb{R}^{d}\mapsto\mathbb{R}$.  The domain of $\mathcal{L}$ is $C^\infty(\Omega)$, where $\Omega\subset\mathbb{R}^{d}$, and the codomain of $\mathcal{L}$ could be, for example, $C^{\infty}(\Omega)$ or simply $\mathbb{R}$.  Suppose that $\mathcal{S}=\left\{\mathbf{x}_{i}\right\}_{i=1}^{N}$ is a set of $N$ unique points in $\mathbb{R}^{d}$. Further define a set of points $\mathcal{X}_{0}=\left\{\mathbf{x}_{k,0}\right\}_{k=1}^{K}$ and associate to each of these points a set $\mathcal{N}_{k,n}=\left\{\mathbf{x}_{k,j}\right\}_{j=1}^{n}$ of the $n$ points in $\mathcal{S}$ nearest to $\mathbf{x}_{k,0}$.  The choice of the points to include in $\mathcal{X}_{0}$ is dependent on the action of $\mathcal{L}$.  For instance, in the case of $\mathcal{L}$ being a derivative it is convenient to set $\mathcal{X}_{0}=\mathcal{S}_{N}$.  On the other hand, if $\mathcal{L}$ is a definite integral, then it is useful to construct on the set $\mathcal{S}$ a tessellation $T=\left\{t_{k}\right\}_{k=1}^{K}$ (via Delaunay tessellation or some other algorithm) of $K$ simplices and to let $\mathcal{X}_{0}$ be the set of the midpoints of the simplices (for each simplex, the midpoint is the average of its vertices, i.e., its barycenter).  For $d=1$ these simplices are intervals of the real line, and for $d=2$ and $d=3$ these are triangles and tetrahedra, respectively.  Throughout the remainder of this work, multiple subscripting using $k$, $n$ and $m$, in particular, will be used to explicitly indicate those variables that depend on each of these parameters.

Approximation of the action of $\mathcal{L}$ on $f$ is accomplished in a manner similar to the concept of RBF-FD where $f(\mathbf{x})$ is first approximated locally by an interpolant, and then $\mathcal{L}$ is applied to the interpolant.  For instance, when considering the approximation of a definite integral over a domain $\Omega$, it convenient to let
\begin{align}
    \mathcal{L}f = \int\limits_{\Omega}f(\mathbf{x})d\mathbf{x}=\sum\limits_{k=1}^{K}\int\limits_{\omega_{k}}f(\mathbf{x})d\mathbf{x}=\sum\limits_{k=1}^{K}\mathcal{L}_{k}f,\nonumber
\end{align}
where $\omega_{k}$ is some portion of $\Omega$ associated with $t_{k}$ and $\mathcal{L}_{k}f$ is to be understood as the integral of $f$ over $\omega_{k}$.  There is an assumption here that $\bigcup_{k=1}^{k}\omega_{k}$ and that the intersection of $\omega_{k}$ and $\omega_{k'}$ is at most a facet shared by the simplices when $k\neq k'$ so that the integral can be decomposed as the sum above.  On the other hand, when $\mathcal{L}$ is a derivative operator, $\mathcal{L}_{k}$ is considered to be the derivative at the point $\mathbf{x}_{k,0}$.  In both cases of integration and differentiation, $\mathcal{L}_{k}$ is applied to a local interpolant of $f$ as the approximation of the action of $\mathcal{L}_{k}$ on $f$.  The following two subsections describe two equivalent approaches to constructing the local interpolant.

\subsection{Approximation on the Basis of Shifts of the Kernel and Polynomials}

Consider a linear combination of (conditionally-) positive definite kernels, $\varphi$, evaluated at a set of center points,
\begin{align}
\phi_{k,n,j}(\mathbf{x}):=\varphi\left(\left\lVert \mathbf{x}-\mathbf{x}_{k,j}\right\rVert_{2}\right), j=1,2,\ldots,n\nonumber
\end{align}
and (supplemental) multivariate polynomial (multinomial) terms.

Define $\{\pi_{k,l}(\mathbf{x})\}_{l=1}^{M_{d,m}}$, with
\begin{align}
M_{d,m}=\left(\begin{array}{c}m+d\\m\end{array}\right),\nonumber
\end{align}
to be a set of all of the multivariate polynomial terms up to total degree $m$. Using multi-index notation these terms have the form $\pi_{k,l}(\mathbf{x})=(\mathbf{x}-\mathbf{x}_{k,0})^{\boldsymbol\alpha_{l}}$, with $\boldsymbol\alpha_{l}=(\alpha_{l,1},\alpha_{l,2},\ldots,\alpha_{l,d})$ ($\alpha_{l,j}>0$, $j=1,2,\ldots,d$) a multi-index and $|\boldsymbol\alpha_{l}|\leq m$. As a reminder, with multi-indices (see, e.g., \cite{hunter2001applied} section 11.1)
\begin{itemize}
\item $\mathbf{x}^{\boldsymbol{\alpha}_{l}}=x_{1}^{\alpha_{l,1}}x_{2}^{\alpha_{l,2}}\cdots x_{d}^{\alpha_{l,d}}$,
\item $|\boldsymbol{\alpha}_{l}|=\sum_{j=1}^{d}\alpha_{l,d}$,
\item $\partial^{\boldsymbol{\alpha}_{l}}=\frac{\partial^{\alpha_{l,1}}}{\partial x_{1}^{\alpha_{l,1}}}\frac{\partial^{\alpha_{l,2}}}{\partial x_{2}^{\alpha_{l,2}}}\cdots\frac{\partial^{\alpha_{l,d}}}{\partial x_{d}^{\alpha_{l,d}}}$, and
\item $\boldsymbol\alpha_{l}!=(\alpha_{l,1}!)(\alpha_{l,2}!)\cdots(\alpha_{l,d}!)$.
\end{itemize}
The interpolant is constructed as
\begin{align}
s_{k,n,m}[f](\mathbf{x}):=\sum_{j=1}^{n}\lambda_{k,n,m,j}[f]\phi_{k,n,j}\left(\mathbf{x}\right)+\sum_{l=1}^{M_{d,m}}\gamma_{k,n,m,l}[f]\pi_{k,l}(\mathbf{x}).\nonumber
\end{align}
Likewise,
\begin{align}
s_{k,n,m}[f](\mathbf{x}):=\left[\begin{array}{c}\boldsymbol\lambda_{k,n,m}[f]\\\boldsymbol\gamma_{k,n,m}[f]\end{array}\right]^{T}\left[\begin{array}{c}\boldsymbol{\Phi}_{k,n}(\mathbf{x}) \\ \boldsymbol{\Pi}_{k,m}(\mathbf{x})\end{array}\right].\nonumber
\end{align}
To ensure that $s_{k,n,m}[f]$ interpolates $f$ at the set of points in $\mathcal{N}_{k,n}$ the coefficient vectors
\begin{align}
\boldsymbol{\lambda}_{k,n,m}[f]=\left[\begin{array}{cccc} \lambda_{k,n,m,1}[f] & \lambda_{k,n,m,2}[f] & \cdots & \lambda_{k,n,m,n}[f] \end{array}\right]^{T}\nonumber
\end{align}
and
\begin{align}
\boldsymbol{\gamma}_{k,n,m}[f]=\left[\begin{array}{cccc} \gamma_{k,n,m,1}[f] & \gamma_{k,n,m,2}[f] & \cdots & \gamma_{k,n,m,M_{d,m}}[f]\end{array}\right]^{T}\nonumber
\end{align}
are chosen to satisfy the interpolation conditions ($j=1,2,\ldots,n$),
\begin{align}
s_{k,n,m}[f](\mathbf{x}_{k,j})=f(\mathbf{x}_{k,j}).\nonumber
\end{align}
This leads to an underdetermined system of linear equations to solve for the coefficient vectors, so the typical constraints applied to kernel-based interpolants where the kernel is conditionally positive definite are also enforced, i.e., ($l=1,2,\ldots,M_{d,m}$)
\begin{align}
\sum_{j=1}^{n}\lambda_{k,n,m,j}[f]\pi_{k,l}(\mathbf{x}_{k,j})=0.\nonumber
\end{align}
The $n\times 1$ vector $\boldsymbol{\Phi}_{k,n}(\mathbf{x})$ and $M_{d,m}\times1$ vector $\boldsymbol{\Pi}_{k,m}(\mathbf{x})$ consist of all of the basis functions evaluated at $\mathbf{x}$, i.e.,
\begin{align}
   \boldsymbol{\Phi}_{k,n}(\mathbf{x})=\left[\begin{array}{cccc} \phi_{k,n,1}\left(\mathbf{x}\right) & \phi_{k,n,2}\left(\mathbf{x}\right)& \cdots & \phi_{k,n,n}\left(\mathbf{x}\right)\end{array}\right]^{T}\nonumber
\end{align}
and
\begin{align}
   \boldsymbol{\Pi}_{k,m}(\mathbf{x})=\left[\begin{array}{cccc} \pi_{k,1}\left(\mathbf{x}\right) & \pi_{k,2}\left(\mathbf{x}\right)& \cdots & \pi_{k,n}\left(\mathbf{x}\right)\end{array}\right]^{T}.\nonumber
\end{align}
%   & j=1,2,\ldots,n \\ \pi_{k,j-n}(\mathbf{x}) & j=n+1,\ldots,n+M_{d,m}\end{array}
%    \right..\nonumber
%\end{align}

Satisfaction of the interpolation conditions and constraints amounts to solving the system of linear equations
\begin{align}
S_{k,n,m}\left[\begin{array}{c}\boldsymbol{\lambda}_{k,n,m}[f] \\ \boldsymbol{\gamma}_{k,n,m}[f]\end{array}\right]=\left[\begin{array}{c}\mathbf{f}_{k,n} \\ \mathbf{0}_{M_{d,m}}\end{array}\right]\nonumber
\end{align}
with the $(n+M_{d,m})\times(n+M_{d,m})$ matrix
\begin{align}
S_{k,n,m}=\left[\begin{array}{cc}\Phi_{k,n}^{T} & P_{k,n,m}\\P_{k,n,m}^{T} & 0_{M_{d,m}\times M_{d,m}}\end{array}\right]\nonumber
\end{align}
and the right hand side consisting of the function $f$ evaluated at the points in $\mathcal{N}_{k,n}$,
\begin{align}
\mathbf{f}_{k,n}=\left[\begin{array}{cccc}f(\mathbf{x}_{k,1}) & f(\mathbf{x}_{k,2}) & \cdots & f(\mathbf{x}_{k,n}) \end{array}\right]^T,\nonumber
\end{align}
and an $M_{d,m}\times 1$ vector of zeros, $\mathbf{0}_{M_{d,m}}$.  The $n\times n$ submatrix $\Phi_{k,n}$ is made up of the kernel basis elements evaluated at each point in $\mathcal{N}_{k,n}$, that is
\begin{align}
[\Phi_{k,n}]_{ij}=\phi_{k,n,j}\left(\mathbf{x}_{k,i}\right),\mbox{ for }i,j=1,2,\ldots,n.\nonumber
\end{align}
Likewise the $n\times M_{d,m}$ Vandermonde matrix $P_{k,n,m}$ consists of the polynomial basis evaluated at each point in $\mathcal{N}_{k,n}$ so that
\begin{align}
[P_{k,n,m}]_{il}=\pi_{k,l}(\mathbf{x}_{k,i}),\mbox{ for }i=1,2,\ldots,n\mbox{ and }l=1,2,\ldots,M_{d,m}.\nonumber
\end{align}
Assuming that the kernel $\varphi$ is conditionally positive-definite of order $m$ and the set $\mathcal{N}_{k,n}$ is unisolvent on the space, $\mathbb{P}_{m}^{d}$, of $d$-variate polynomials up to degree $m$, the matrix $S_{k,n,m}$ is invertible and the interpolation problem has a unique solution \cite{HW2005}.

\subsection{The Lagrange Form of the Interpolant}

An alternative formulation of the interpolant, more convenient for the presentation of theoretical results, is written
\begin{align}
 s_{k,n,m}[f](\mathbf{x})= \sum\limits_{i=1}^{n}\psi_{k,n,m,i}(\mathbf{x})f(\mathbf{x}_{k,i}),\nonumber
\end{align}
where the new set of basis functions satisfy the cardinal property
\begin{align}
    \psi_{k,n,m,i}(\mathbf{x}_{k,j})=\left\{\begin{array}{cc} 1 & i=j \\ 0 & i\neq j\end{array}\right..\nonumber
\end{align}
The two sets of basis functions are related by
\begin{align}
    S_{k,n,m}\left[\begin{array}{c}\boldsymbol\Psi_{k,n,m}(\mathbf{x})\\\boldsymbol\Xi_{k,n,m}(\mathbf{x})\end{array}\right]=\left[\begin{array}{c}\boldsymbol\phi_{k,n}(\mathbf{x})\\\boldsymbol{\pi}_{k,m}(\mathbf{x})\end{array}\right]\label{eq:cardinalrelationship}
\end{align}
with only the $n\times 1$ vector
\begin{align}
    \boldsymbol\Psi_{k,n,m}(\mathbf{x}) = \left[\begin{array}{cccc}\psi_{k,n,m,1}(\mathbf{x}) & \psi_{k,n,m,2}(\mathbf{x}) & \cdots & \psi_{k,n,m,n}(\mathbf{x})\end{array}\right].\nonumber
\end{align}
required to form $s_{k,n,m}[f]$.

\section{Error Estimation in Local Kernel Approximations} \label{sec:Error_Estimation}

An estimate of the error in the approximate application of $\mathcal{L}$ is necessary to determine locations in the domain of interest that require refinement to achieve a desired tolerance.  In the following subsections a convenient expression of the pointwise error and an estimate that closely matches that error are discussed.  Further, an expression for the interpolation coefficients that correspond to the shifts of the kernel in the basis used for approximation (that is, the coefficient vector $\boldsymbol{\lambda}_{k,n,m}[f]$) is given in terms of linear combinations of the weights of strictly polynomial based approximations of a set of (mixed partial) derivatives.

\subsection{Pointwise Error in the Local Kernel Based Interpolant}

For reasons that will be made clear in the following sections let $\mu\in\mathbb{Z}$ and $\mu\geq1$.  The Taylor formula of a function $f(\mathbf{x})$ about the point $\mathbf{x}_{k,0}$, with $f$ having continuous mixed partial derivatives up to order $m+\mu+1$ in a neighborhood of $\mathbf{x}_{k,0}$, can be written as \cite{MultipointTaylor}
\begin{align}
    f(\mathbf{x}) =  \sum_{l=1}^{M_{d,m}}\frac{1}{\boldsymbol\alpha_{l} !} \partial^{\boldsymbol\alpha_{l}}f(\mathbf{x})\big|_{\mathbf{x}=\mathbf{x}_{k,0}}\pi_{k,l}(\mathbf{x})+\sum_{l=M_{d,m}+1}^{M_{d,m+\mu}}\frac{1}{\boldsymbol\alpha_{l} !} \partial^{\boldsymbol\alpha_{l}}f(\mathbf{x})\big|_{\mathbf{x}=\mathbf{x}_{k,0}}\pi_{k,l}(\mathbf{x})+R_{m+\mu}[f](\mathbf{x}) \label{eq:expanded_taylor}
\end{align}
where the remainder term is expressible as
\begin{align}
R_{m+\mu}[f](\mathbf{x})=\sum\limits_{l=M_{d,m+\mu}+1}^{M_{d,m+\mu+1}}\frac{m+\mu+1}{\boldsymbol\alpha_{l}!}\int\limits_{0}^{1}\frac{\partial^{\boldsymbol\alpha_{l}}f(\mathbf{y})|_{\mathbf{y}=\mathbf{x}+t(\mathbf{x}-\mathbf{x}_{k,0})}}{(m+\mu)!}(1-t)^{m+\mu}dt(\mathbf{x}-\mathbf{x}_{k,0})^{\boldsymbol\alpha_{l}}.\nonumber
\end{align}
When evaluated inside the convex hull of $\mathcal{N}_{k,n}$ this remainder term behaves as $O(h_{k,n}^{m+\mu+1})$ as $h_{k,n}\to0$, where $h_{k,n}$ is the typical spacing between the points in $\mathcal{N}_{k,n}$.

\begin{prop}
Suppose the $f$ has continuous mixed partial derivatives up to order $m+\mu+1$ in a neighborhood of $\mathbf{x}_{k,0}$.  Further assume that the kernel $\varphi$ is conditionally positive-definite of order $m$ and the set $\mathcal{N}_{k}$ is unisolvent on the space, $\mathbb{P}_{m}^{d}$, of $d$-variate polynomials up to degree $m$.  The point-wise error in the kernel based interpolant $s_{k,n,m}[f]$ is
\begin{align}
s_{k,n,m}&[f](\mathbf{x})-f(\mathbf{x})\nonumber\\=&\sum\limits_{l=1}^{M_{d,m+\mu}-M_{d,m}}\frac{1}{\boldsymbol\alpha_{M_{d,m}+l}!}\partial^{\boldsymbol\alpha_{M_{d,m}+l}}f(\mathbf{x})\big|_{\mathbf{x}=\mathbf{x}_{k,0}} E_{k,n,m,M_{d,m}+l}(\mathbf{x})+\left(s_{k,n,m}[R_{m+\mu}[f]](\mathbf{x})-R_{m+\mu}[f](\mathbf{x})\right). \label{eq:interperrorsums}
\end{align}
with
\begin{align}
E_{k,n,m,M_{d,m}+l}(\mathbf{x})=s_{k,n,m}[\pi_{k,m,M_{d,m}+l}](\mathbf{x})-\pi_{k,m,M_{d,m}+l}(\mathbf{x})\nonumber
\end{align}
the error in approximating the polynomial term $\pi_{k,m,M_{d,m}+l}$ with a kernel interpolant.
\end{prop}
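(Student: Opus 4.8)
The plan is to exploit the linearity of the interpolation operator $s_{k,n,m}[\cdot]$ together with the fact that it reproduces polynomials exactly up to degree $m$. First I would apply $s_{k,n,m}$ to both sides of the Taylor expansion \eqref{eq:expanded_taylor}, writing $f = p_m + q_{\mu} + R_{m+\mu}[f]$, where $p_m$ collects the polynomial terms of degree at most $m$ (the first sum, indexed $l=1,\dots,M_{d,m}$), $q_\mu$ collects the terms of degree between $m+1$ and $m+\mu$ (the second sum, indexed $l=M_{d,m}+1,\dots,M_{d,m+\mu}$), and $R_{m+\mu}[f]$ is the remainder. By linearity, $s_{k,n,m}[f] = s_{k,n,m}[p_m] + s_{k,n,m}[q_\mu] + s_{k,n,m}[R_{m+\mu}[f]]$.

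Next I would form the error $s_{k,n,m}[f](\mathbf{x}) - f(\mathbf{x})$ by subtracting the Taylor expansion itself. The key observation is polynomial reproduction: since $\varphi$ is conditionally positive-definite of order $m$ and $\mathcal{N}_{k,n}$ is unisolvent on $\mathbb{P}_m^d$, the interpolant reproduces every polynomial of degree at most $m$, so $s_{k,n,m}[p_m](\mathbf{x}) = p_m(\mathbf{x})$ and the degree-$\leq m$ contributions cancel identically. What survives from the polynomial part is only $s_{k,n,m}[q_\mu](\mathbf{x}) - q_\mu(\mathbf{x})$, which by linearity decomposes termwise into $\sum_l \frac{1}{\boldsymbol\alpha_{M_{d,m}+l}!}\partial^{\boldsymbol\alpha_{M_{d,m}+l}}f|_{\mathbf{x}_{k,0}}\bigl(s_{k,n,m}[\pi_{k,M_{d,m}+l}](\mathbf{x}) - \pi_{k,M_{d,m}+l}(\mathbf{x})\bigr)$, with the parenthetical factor being exactly $E_{k,n,m,M_{d,m}+l}(\mathbf{x})$. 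The remainder contributes $s_{k,n,m}[R_{m+\mu}[f]](\mathbf{x}) - R_{m+\mu}[f](\mathbf{x})$, which is the final parenthesized term in \eqref{eq:interperrorsums}. Reindexing the middle sum so that $l$ runs from $1$ to $M_{d,m+\mu}-M_{d,m}$ then yields the stated identity.

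The main subtlety to verify carefully is polynomial reproduction itself: I would justify that when the right-hand data $\mathbf{f}_{k,n}$ consists of the nodal values of a polynomial $p \in \mathbb{P}_m^d$, the unique solution of the interpolation system $S_{k,n,m}$ sets $\boldsymbol\lambda_{k,n,m}[p]=\mathbf{0}$ and recovers $p$ exactly through the polynomial block, so that $s_{k,n,m}[p]=p$. This follows from unisolvency (which makes $P_{k,n,m}$ full column rank and forces the constraint rows to pin down the polynomial coefficients) together with invertibility of $S_{k,n,m}$ guaranteed in the previous subsection. Once that is in hand, the rest is a bookkeeping manipulation of finite sums with no analytic obstruction; the $O(h_{k,n}^{m+\mu+1})$ behavior of $R_{m+\mu}[f]$ is not needed for the identity and is only relevant for the subsequent error-estimate discussion.
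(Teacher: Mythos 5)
Your proposal is correct and takes essentially the same route as the paper's proof: both substitute the Taylor expansion \eqref{eq:expanded_taylor} of $f$ into the interpolant, use reproduction of $\mathbb{P}_{m}^{d}$ by $s_{k,n,m}$ to cancel the degree-at-most-$m$ terms, and subtract, with the surviving degree $m+1$ through $m+\mu$ terms giving the $E_{k,n,m,M_{d,m}+l}$ sum and the remainder giving the final parenthesized term. The only cosmetic difference is that the paper realizes the linearity step concretely by writing $s_{k,n,m}[f]$ in the cardinal basis and swapping orders of summation, while you invoke linearity of the interpolation operator directly and additionally spell out the polynomial-reproduction justification that the paper simply asserts.
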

\begin{proof}
Existence of the unique interpolant $s_{k,n,m}[f]$ follows from the kernel $\varphi$ being conditionally positive-definite of order $m$ and the set $\mathcal{N}_{k,n}$ being unisolvent on the space, $\mathbb{P}_{m}^{d}$, of $d$-variate polynomials up to degree $m$.  To develop a convenient expression for the error in the kernel based interpolant, it is useful to evaluate the Taylor formula at each point in $\mathcal{N}_{k,n}$ and write
\begin{align}
    s_{k,n,m}[f]=&\sum_{i=1}^{n}\psi_{k,n,m,i}(\mathbf{x})\left(\sum_{l=1}^{M_{d,m}}\frac{1}{\boldsymbol\alpha_{l} !} \partial^{\boldsymbol\alpha_{l}}f(\mathbf{x})\big|_{\mathbf{x}=\mathbf{x}_{k,0}}\pi_{k,l}(\mathbf{x}_{k,i})\right)+\cdots\nonumber\\
    &\sum_{i=1}^{n}\psi_{k,n,m,i}(\mathbf{x})\left(\sum_{l=M_{d,m}+1}^{M_{d,m+\mu}}\frac{1}{\boldsymbol\alpha_{l} !} \partial^{\boldsymbol\alpha_{l}}f(\mathbf{x})\big|_{\mathbf{x}=\mathbf{x}_{k,0}}\pi_{k,l}(\mathbf{x}_{k,i})\right)+\cdots\nonumber\\
    &\sum_{i=1}^{n}\psi_{k,n,m,i}(\mathbf{x})R_{m+\mu}[f](\mathbf{x}_{k,i}).\nonumber
\end{align}
Swapping the orders of summation in the first two lines of the expression reveals
\begin{align}
    s_{k,n,m}[f]=&\sum_{l=1}^{M_{d,m}}\frac{1}{\boldsymbol\alpha_{l} !} \partial^{\boldsymbol\alpha_{l}}f(\mathbf{x})\big|_{\mathbf{x}=\mathbf{x}_{k,0}}\left(s_{k,n,m}[\pi_{k,l}](\mathbf{x})\right)+\cdots\nonumber\\
    &\sum_{l=M_{d,m}+1}^{M_{d,m+\mu}}\frac{1}{\boldsymbol\alpha_{l} !} \partial^{\boldsymbol\alpha_{l}}f(\mathbf{x})\big|_{\mathbf{x}=\mathbf{x}_{k,0}}\left(s_{k,n,m}[\pi_{k,l}](\mathbf{x})\right)+\cdots\nonumber\\
    &\left(s_{k,n,m}[R_{m+\mu}[f]](\mathbf{x}) \right).\label{eq:expanded_interpolant}
\end{align}
Notice that $s_{k,n,m}[\pi_{k,l}](\mathbf{x})=\pi_{k,l}(\mathbf{x})$ for $l\leq M_{d,m}$ so that the first sum in \eqref{eq:expanded_interpolant} is identical to that in \eqref{eq:expanded_taylor}.  Therefore
\begin{align}
    s_{k,n,m}&[f](\mathbf{x})-f(\mathbf{x})\nonumber\\=&\sum\limits_{l=M_{d,m}}^{M_{d,m+\mu}}\frac{1}{\boldsymbol\alpha_{l}!}\partial^{\boldsymbol\alpha_{l}}f(\mathbf{x})\big|_{\mathbf{x}=\mathbf{x}_{k,0}} \left(s_{k,n,m}[\pi_{k,m,l}](\mathbf{x})-\pi_{k,m,l}(\mathbf{x})\right)+\left(s_{k,n,m}[R_{m+\mu}[f]](\mathbf{x})-R_{m+\mu}[f](\mathbf{x})\right).\nonumber
\end{align}
Utilizing the definition of $E_{k,n,m,M_{d,m}+l}$ and changing the indexing on the sum produces the desired result.
\end{proof}

\begin{remark} \label{rem:Approx_Order}
Although in this work $\mu>0$, if instead $\mu=0$, then the first term in \eqref{eq:interperrorsums} does not appear, and this error formula is analogous to the one presented in \cite{VB2019} but in a different form.  Utilizing similar arguments, the first sum in \eqref{eq:interperrorsums} behaves as $O(h_{k,n}^{m+1})$ as $h_{k,n}\to0$, while the second term behaves as $O(h_{k,n}^{m+\mu+1})$ as $h_{k,n}\to0$.  Therefore, the error in the interpolant is dominated by the first term.
\end{remark}

\subsection{Finite Difference Expressions of Kernel Interpolation Coefficients} \label{sec:finitediffcoeffs}

As long as $n>M_{d,m}$, the constraints $\sum_{j=1}^{n}\lambda_{k,m,j}[f]\pi_{k,l}(\mathbf{x}_{k,j})=0$, $l=1,2,\ldots,M_{d,m}$, form an underdetermined system of equations $P_{k,n,m}^{T}\boldsymbol{\lambda}_{k,n,m}=\mathbf{0}_{M_{d,m}}$.  If the set $\mathcal{N}_{k,n}$ is unisolvent with respect to the span of the set $\{\pi_{k,l}(\mathbf{x})\}_{l=1}^{M_{d,m}}$, the dimension of the null space of $P_{k,n,m}^{T}$ is $n-M_{d,m}$.  Suppose that $\mathbf{d}_{k,n,l}$ is the solution to
\begin{align}
    \left[\begin{array}{cccc}\pi_{k,1}(\mathbf{x}_{k,1}) & \pi_{k,1}(\mathbf{x}_{k,2}) & \cdots & \pi_{k,1}(\mathbf{x}_{k,n}) \\
    \pi_{k,2}(\mathbf{x}_{k,1}) & \pi_{k,2}(\mathbf{x}_{k,2}) & \cdots & \pi_{k,2}(\mathbf{x}_{k,n})\\
    \vdots & \vdots & & \vdots \\
    \pi_{k,l-1}(\mathbf{x}_{k,1}) & \pi_{k,l-1}(\mathbf{x}_{k,2}) & \cdots & \pi_{k,l-1}(\mathbf{x}_{k,n}) \\
    \pi_{k,l}(\mathbf{x}_{k,1}) & \pi_{k,l}(\mathbf{x}_{k,2}) & \cdots & \pi_{k,l}(\mathbf{x}_{k,n}) \\
    \pi_{k,l+1}(\mathbf{x}_{k,1}) & \pi_{k,l+1}(\mathbf{x}_{k,2}) & \cdots & \pi_{k,l+1}(\mathbf{x}_{k,n}) \\
    \vdots & \vdots & & \vdots \\
    \pi_{k,n}(\mathbf{x}_{k,1}) & \pi_{k,n}(\mathbf{x}_{k,2}) & \cdots & \pi_{k,n}(\mathbf{x}_{k,n})
    \end{array}\right]\mathbf{d}_{k,n,l}=\left[\begin{array}{c}0\\0\\\vdots\\0\\\boldsymbol{\alpha}_{l}!\\0\\\vdots\\0\end{array}\right].\nonumber
\end{align}
The set $\{\mathbf{d}_{k,n,l}\}_{l=M_{d,m}+1}^{n}$ is linearly independent and contained in the null space of $P_{k,n,m}^{T}$.  Therefore, there exists a set of weights $\beta_{k,n,m,l}[f]$, $l=M_{d,m}+1,\ldots,n$, such that
\begin{align}
    \boldsymbol{\lambda}_{k,n,m}[f]=\sum\limits_{l=M_{d,m}+1}^{n}\beta_{k,n,m,l}[f]\mathbf{d}_{k,n,l}.\nonumber
\end{align}
Supposing that $\partial^{\boldsymbol\alpha_{j}}f(\mathbf{x})$ is Lipschitz continuous in a (convex) neighborhood of $\mathbf{x}_{k,0}$, for $j=1,2,\ldots,n$, \cite{ScatteredNumericalDifferentiation}
\begin{align}
\mathbf{d}_{k,n,l}^{T}\mathbf{f}_{k,n,m}=\partial^{\boldsymbol{\alpha}_{l}}q_{k,n}[f](\mathbf{x})\big|_{\mathbf{x}=\mathbf{x}_{k,0}}\nonumber
\end{align}
is an approximation of $\partial^{\boldsymbol{\alpha}_{l}}f(\mathbf{x})\big|_{\mathbf{x}=\mathbf{x}_{k,0}}$ where
\begin{align}
    q_{k,n}[f](\mathbf{x})=\sum\limits_{l=1}^{n}\rho_{k,n,l}[f]\pi_{k,n,l}(\mathbf{x})\nonumber
\end{align}
is a polynomial interpolant satisfying $q_{k,n}[f](\mathbf{x}_{k,j})=f(\mathbf{x}_{k,j})$, $j=1,2,\ldots,n$. In what follows it is more convenient to write $\boldsymbol{\lambda}_{k,n,m}[f]=D_{k,n,m}\boldsymbol{\beta}_{k,n,m}[f]$, where
\begin{align}
D_{k,n,m+\mu}=\left[\begin{array}{cccc}\mathbf{d}_{k,n,M_{d,m}+1} & \mathbf{d}_{k,n,M_{d,m}+2} & \ldots & \mathbf{d}_{k,n,n}\end{array}\right]\nonumber
\end{align}
and
\begin{align}
\boldsymbol{\beta}_{k,n,m}[f]=\left[\begin{array}{cccc}\beta_{k,n,m,M_{d,m}+1}[f] & \beta_{k,n,m,M_{d,m}+2}[f] & \ldots & \beta_{k,n,m,n}[f]\end{array}\right]^{T}.\nonumber
\end{align}

\subsection{An Estimate of the Error in Local Kernel Based Approximations}

To construct a method that locally adapts the spacing of the points to reduce the error,
\begin{align}
\lVert\mathcal{L}_{k}s_{k,n,m}[f]-\mathcal{L}_{k}f\rVert_{2}=\lVert\mathcal{L}_{k}(s_{k,n,m}[f]-f)\rVert_{2},\nonumber
\end{align}
in the approximation of $\mathcal{L}_{k}f$ an estimate of the error must be utilized.  Here
 \begin{align}
\lVert\mathcal{L}_{k}s_{k,n,m}[f]-\mathcal{L}_{k}s_{k,n,m+\mu}[f]\rVert_{2}=\lVert\mathcal{L}_{k}(s_{k,n,m}[f]-s_{k,n,m+\mu}[f])\rVert_{2},\nonumber
\end{align}
$\mu\in\mathbb{Z}$ and $\mu\geq1$, is used as the estimate of the error.

\begin{prop}
Suppose that the kernel $\varphi$ is conditionally positive-definite of order $m$ and the set $\mathcal{N}_{k,n}$ is unisolvent on the space, $\mathbb{P}_{m+\mu}^{d}$, of $d$-variate polynomials up to degree $m+\mu$. Further, let $\partial^{\boldsymbol\alpha_{j}}f(\mathbf{x})$ be Lipschitz continuous in a convex neighborhood of $\mathbf{x}_{k,0}$ for $j=1,2,\ldots,n$.  If $n=M_{d,m+\mu}$, then
\begin{align}
s_{k,n,m}&[f](\mathbf{x})-s_{k,n,m+\mu}[f](\mathbf{x})=\nonumber\\
&\sum\limits_{l=1}^{M_{d,m+\mu}-M_{d,m}}\frac{1}{\boldsymbol\alpha_{M_{d,m}+l}!}\partial^{\boldsymbol\alpha_{M_{d,m}+l}}q_{k,n}[f](\mathbf{x})|_{\mathbf{x}=\mathbf{x}_{k,0}}E_{k,n,m,M_{d,m}+l}(\mathbf{x}).\label{eq:errorestimatesums}
\end{align}
\end{prop}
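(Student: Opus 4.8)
The plan is to reduce everything to the pointwise error identity \eqref{eq:interperrorsums} already established in the first Proposition, by recognizing that the higher-degree kernel interpolant $s_{k,n,m+\mu}[f]$ collapses to the pure polynomial interpolant $q_{k,n}[f]$ when $n=M_{d,m+\mu}$. First I would examine the defining linear system for $s_{k,n,m+\mu}[f]$, whose constraint block reads $P_{k,n,m+\mu}^{T}\boldsymbol{\lambda}_{k,n,m+\mu}[f]=\mathbf{0}_{M_{d,m+\mu}}$. Since $\mathcal{N}_{k,n}$ is unisolvent on $\mathbb{P}_{m+\mu}^{d}$ and $n=M_{d,m+\mu}$, the Vandermonde matrix $P_{k,n,m+\mu}$ is square and nonsingular, forcing $\boldsymbol{\lambda}_{k,n,m+\mu}[f]=\mathbf{0}$. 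The top block then reduces to $P_{k,n,m+\mu}\boldsymbol{\gamma}_{k,n,m+\mu}[f]=\mathbf{f}_{k,n}$, so $\boldsymbol{\gamma}_{k,n,m+\mu}[f]$ is exactly the coefficient vector of the degree-$(m+\mu)$ polynomial interpolant of $f$ on $\mathcal{N}_{k,n}$. Hence $s_{k,n,m+\mu}[f]=q_{k,n}[f]$.

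Next I would use the Lagrange representation $s_{k,n,m}[f](\mathbf{x})=\sum_{i=1}^{n}\psi_{k,n,m,i}(\mathbf{x})f(\mathbf{x}_{k,i})$ to observe that $s_{k,n,m}[\,\cdot\,]$ depends on its argument only through nodal values. Because $q_{k,n}[f](\mathbf{x}_{k,i})=f(\mathbf{x}_{k,i})$ for every $i$, this gives $s_{k,n,m}[q_{k,n}[f]]=s_{k,n,m}[f]$. Writing $g\defeq q_{k,n}[f]$, the quantity of interest becomes $s_{k,n,m}[f](\mathbf{x})-s_{k,n,m+\mu}[f](\mathbf{x})=s_{k,n,m}[g](\mathbf{x})-g(\mathbf{x})$.

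Finally I would apply the error formula \eqref{eq:interperrorsums} with $f$ replaced by the polynomial $g$. The hypotheses of that Proposition hold: $g$ is $C^{\infty}$, and unisolvency on $\mathbb{P}_{m+\mu}^{d}$ implies unisolvency on $\mathbb{P}_{m}^{d}$. The crucial simplification is that $g\in\mathbb{P}_{m+\mu}^{d}$, so all mixed partials of $g$ of order $m+\mu+1$ vanish, giving $R_{m+\mu}[g]\equiv0$ and therefore $s_{k,n,m}[R_{m+\mu}[g]](\mathbf{x})-R_{m+\mu}[g](\mathbf{x})=0$. What survives is precisely the sum in \eqref{eq:errorestimatesums}, since $\partial^{\boldsymbol\alpha_{M_{d,m}+l}}g(\mathbf{x})\big|_{\mathbf{x}=\mathbf{x}_{k,0}}=\partial^{\boldsymbol\alpha_{M_{d,m}+l}}q_{k,n}[f](\mathbf{x})\big|_{\mathbf{x}=\mathbf{x}_{k,0}}$; the Lipschitz hypothesis on $\partial^{\boldsymbol\alpha_{j}}f$ is what permits these derivatives to be identified with the finite-difference weights $\mathbf{d}_{k,n,M_{d,m}+l}^{T}\mathbf{f}_{k,n}$ of Subsection~\ref{sec:finitediffcoeffs}, which is how the estimate is computed in practice.

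The main obstacle, and really the only nontrivial step, is the first one: justifying the collapse $s_{k,n,m+\mu}[f]=q_{k,n}[f]$ from the square, nonsingular constraint block. Once that identification and the vanishing of $R_{m+\mu}[g]$ are in hand, the result is inherited verbatim from the previously proved pointwise error identity, so the remainder of the argument is purely bookkeeping.
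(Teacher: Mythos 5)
Your proof is correct, but it takes a genuinely different route from the paper. The paper works at the level of cardinal functions: it block-inverts $S_{k,n,m+\mu}$ (using that $S_{k,n,m}$ is its upper-left block) to express $\boldsymbol\Psi_{k,n,m+\mu}-\boldsymbol\Psi_{k,n,m}$ through the matrix $\Lambda_{k,n,m}$ of kernel coefficients for interpolating the higher-degree monomials, then invokes the decomposition $\Lambda_{k,n,m}=D_{k,n,m+\mu}B_{k,n,m}$ from section \ref{sec:finitediffcoeffs} to arrive at the general identity \eqref{eq:full_estimate}, valid for all $n\geq M_{d,m+\mu}$, of which \eqref{eq:errorestimatesums} is the specialization $n=M_{d,m+\mu}$. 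You instead exploit the degeneracy of that special case directly: with $P_{k,n,m+\mu}$ square and nonsingular, the constraint block forces $\boldsymbol\lambda_{k,n,m+\mu}[f]=\mathbf{0}$ (and, as your argument implicitly shows, $S_{k,n,m+\mu}$ is then invertible without any appeal to conditional positive definiteness), so $s_{k,n,m+\mu}[f]=q_{k,n}[f]$; combining this with the nodal-value dependence $s_{k,n,m}[q_{k,n}[f]]=s_{k,n,m}[f]$ and applying \eqref{eq:interperrorsums} to the polynomial $g=q_{k,n}[f]$, whose remainder $R_{m+\mu}[g]$ vanishes identically, yields the result. Your argument is shorter and more conceptual, and it makes transparent why the estimate is exactly the dominant error term with $\partial^{\boldsymbol\alpha}f$ replaced by $\partial^{\boldsymbol\alpha}q_{k,n}[f]$; what it does not deliver is the extra sum in \eqref{eq:full_estimate} governing $n>M_{d,m+\mu}$ (used in the remark following the proposition) or the block-inversion formula that the paper reuses for the efficient weight update \eqref{eq:efficient_solve}. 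One small correction: the identification $\mathbf{d}_{k,n,M_{d,m}+l}^{T}\mathbf{f}_{k,n}=\partial^{\boldsymbol\alpha_{M_{d,m}+l}}q_{k,n}[f](\mathbf{x})\big|_{\mathbf{x}=\mathbf{x}_{k,0}}$ is purely algebraic (it follows from $\partial^{\boldsymbol\alpha_{l}}\pi_{k,l'}(\mathbf{x}_{k,0})=\boldsymbol\alpha_{l}!\,\delta_{ll'}$ and the definition of $\mathbf{d}_{k,n,l}$), so the Lipschitz hypothesis is not needed for the identity itself in your route; it matters only for the quantitative claim that these quantities approximate $\partial^{\boldsymbol\alpha}f(\mathbf{x}_{k,0})$, as in remark \ref{rem:estimate1}.
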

\begin{proof}
Let $n\geq M_{d,m+\mu}$ and notice that
\begin{align}
S_{k,n,m+\mu} = \left[\begin{array}{cc}S_{k,n,m} & \left[\begin{array}{c}\tilde{P}_{k,n,m+\mu} \\ 0_{(n+M_{d,m})\times(M_{d,m+\mu}-M_{d,m})}\end{array}\right] \\
\left[\begin{array}{cc}\tilde{P}_{k,n,m+\mu}^{T} & 0_{(M_{d,m+\mu}-M_{d,m})\times(n+M_{d,m})}\end{array}\right] & 0_{M_{d,m+\mu}-M_{d,m}}\end{array}\right],\nonumber
\end{align}
so that if $S_{k,n,m}$ is invertible and $P_{k,n,m+\mu}$ is full rank (a consequence of the unisolvency of $\mathcal{N}_{k,n}$), block matrix inversion of $S_{k,n,m+\mu}$ yields
\begin{align}
\Psi_{k,n,m+\mu}(\mathbf{x})=\Psi_{k,n,m}(\mathbf{x})-\Lambda_{k,n,m}\left(\tilde{P}_{k,n,m+\mu}^{T}\Lambda_{k,n,m}\right)^{-1}\left(\tilde{P}_{k,n,m}^{T}\Psi_{k,n,m}(\mathbf{x})-\tilde{\boldsymbol\Pi}_{k,n,m+\mu}(\mathbf{x})\right),\nonumber
\end{align}
where
\begin{align}
\Lambda_{k,n,m}=\left[\begin{array}{cccc}\boldsymbol{\lambda}_{k,n,m}[\pi_{k,M_{d,m}+1}] & \boldsymbol{\lambda}_{k,n,m}[\pi_{k,M_{d,m}+2}] & \cdots & \boldsymbol{\lambda}_{k,n,m}[\pi_{k,M_{d,m+\mu}}] \end{array}\right]\nonumber
\end{align}
is the matrix with columns consisting of the interpolation coefficients corresponding to the kernel basis elements when interpolating polynomial the polynomial basis elements of degree $m+1,m+2,\ldots,m+\mu$ using only the kernel basis set supplemented by polynomial terms up to degree $m$. The results of section \ref{sec:finitediffcoeffs} suggest that the matrix $\Lambda_{k,n,m}$ can be written as
\begin{align}
    \Lambda_{k,n,m}=D_{k,n,m+\mu}B_{k,n,m},\nonumber
\end{align}
where
\begin{align}
B_{k,n,m}=\left[\begin{array}{cccc}\boldsymbol{\beta}_{k,n,m}[\pi_{k,M_{d,m}+1}] & \boldsymbol{\beta}_{k,n,m}[\pi_{k,M_{d,m}+2}] & \cdots & \boldsymbol{\beta}_{k,n,m}[\pi_{k,M_{d,m+\mu}}] \end{array}\right].\nonumber
\end{align}
Further,
\begin{align}
\tilde{P}_{k,n,m+\mu}^{T}\Lambda_{k,n,m} =&(D_{k,n,m+\mu}^{T}\tilde{P}_{k,n,m+\mu})^{T}\hat{B}_{k,n,m}\nonumber\\
=&A_{m+\mu}\hat{B}_{k,n,m}\nonumber
\end{align}
with
\begin{align}
A_{m+\mu}= \left[\begin{array}{cccc}\boldsymbol{\alpha}_{M_{d,m}+1}! & & & \\
& \boldsymbol{\alpha}_{M_{d,m}+2}! & & \\
& & \ddots & \\
& & & \boldsymbol{\alpha}_{M_{d,m+\mu}}!\end{array}\right].\nonumber
\end{align}
and $\hat{B}_{k,n,m}$ the first $M_{d,m+\mu}-M_{d,m}$ rows of $B_{k,n,m}$.  Therefore, after substitution
\begin{align}
\Psi_{k,n,m+\mu}(\mathbf{x})=\Psi_{k,n,m}(\mathbf{x})-D_{k,n,m+\mu}B_{k,n,m}\left(\hat{B}_{k,n,m}\right)^{-1}A_{m+\mu}^{-1}\left(\tilde{P}_{k,n,m}^{T}\Psi_{k,n,m}(\mathbf{x})-\tilde{\boldsymbol\Pi}_{k,n,m+\mu}(\mathbf{x})\right).\nonumber
\end{align}
Notice that
\begin{align}
s_{k,n,m}&[f](x)-s_{k,n,m+\mu}[f](\mathbf{x}) \nonumber\\
=& (\Psi_{k,n,m}(\mathbf{x})-\Psi_{k,n,m+\mu}(\mathbf{x}))^{T}\mathbf{f}_{k,n}\nonumber\\
=&\left(D_{k,n,m+\mu}B_{k,n,m}\left(\hat{B}_{k,n,m}\right)^{-1}A_{m+\mu}^{-1}\left(\tilde{P}_{k,n,m}^{T}\Psi_{k,n,m}(\mathbf{x})-\tilde{\boldsymbol\Pi}_{k,n,m+\mu}(\mathbf{x})\right)\right)^{T}\mathbf{f}_{k,n},\nonumber
\end{align}
so that using the definition of $E_{k,n,m,M_{d,m}+l}$
\begin{align}
s_{k,n,m}&[f](x)-s_{k,n,m+\mu}[f](\mathbf{x}) =\nonumber\\ &\sum\limits_{l=1}^{M_{d,m+\mu}-M_{d,m}}\frac{1}{\boldsymbol\alpha_{M_{d,m}+l}!}\mathbf{d}_{k,n,m,M_{d,m}+l}^{T}\mathbf{f}_{k,n}E_{k,n,m,M_{d,m}+l}(\mathbf{x})+\cdots\nonumber\\
&\sum\limits_{i=1}^{n-M_{d,m+\mu}}\mathbf{d}_{k,n,m,M_{d,m+\mu}+i}^{T}\mathbf{f}_{k,n}\sum\limits_{l=1}^{M_{d,m+\mu}-M_{d,m}}C_{k,n,m,li}\frac{1}{\boldsymbol\alpha_{M_{d,m}+l}!}E_{k,n,m,M_{d,m}+l}(\mathbf{x})\label{eq:full_estimate}
\end{align}
with $C_{k,n,m}=\tilde{B}_{k,n,m}\left(\hat{B}_{k,n,m}\right)^{-1}$.  Taking $n=M_{d,m+\mu}$ the second sum on the right does not appear in the expression, and utilizing the continuity assumption on the derivatives of $f$ leads to the desired result.
\end{proof}

\begin{remark} \label{rem:estimate1}
    The error estimate \eqref{eq:errorestimatesums} is an approximation to the dominant term in the expression \eqref{eq:interperrorsums}.  According to \cite{ScatteredNumericalDifferentiation}
     \begin{align}
     \left\lvert \partial^{\boldsymbol\alpha_{l}}f(\mathbf{x})|_{\mathbf{x}=\mathbf{x}_{k,0}}-\partial^{\boldsymbol\alpha_{l}}q_{k,n}[f](\mathbf{x})|_{\mathbf{x}=\mathbf{x}_{k,0}}\right\rvert\leq O(h_{k,n}^{m+\mu+1-|\boldsymbol\alpha_{l}|})\mbox{ as }h_{k,n}\to0,\nonumber
     \end{align}
     with
        \begin{align}
    h_{k,n}=\max\limits_{j=1,2,\ldots,n}\left\lVert \mathbf{x}_{k,j}-\mathbf{x}_{k,0}\right\rVert_{2}.\nonumber
    \end{align}
    The terms \cite{VB2019}
     \begin{align}
     s_{k,n,m}[\pi_{k,m,l}](\mathbf{x})-\pi_{k,m,l}(\mathbf{x})\nonumber
     \end{align}
     are also at most $O(h_{k,n}^{|\boldsymbol\alpha_{l}|})$.  Therefore, the difference between the error estimate and the dominant term in the error in the kernel based interpolant that includes polynomial terms up to degree $m$ is at most $O(h^{m+\mu+1})$ as $h_{k,n}\to0$.  For any choice of $\mu>0$, this is at least one order higher than the size of the dominant term in the error in the kernel interpolant.  That is, this is estimate provides a reasonable approximation of the error.
\end{remark}

\begin{remark}
    For $n>M_{d,m+\mu}$ the second sum on the right hand side of \eqref{eq:full_estimate} does not readily appear to be an approximation for the second term on the right hand side of \eqref{eq:interperrorsums}.  However, the discussion in remark \ref{rem:estimate1} applies similarly and these terms are also at least one order higher than the size of the dominant term in the error in the kernel interpolant. Unless otherwise stated the computations presented herein use $n=M_{d,m+\mu}$.
\end{remark}

\section{An Implementation of Adaptive Local Kernel Approximations} \label{sec:Implementation}

Given the error estimate developed in the previous sections, an initial implementation of an $h$-adaptive method for approximating action of a linear operator is now presented.  This implementation only adds nodes to the set $\mathcal{S}$ to decrease the spacing between nodes locally (refinement), but it does not remove nodes where the error estimate indicates the error is smaller than the prescribed tolerance (derefinement).  Efficient computation of the estimate relies on the similarities between the systems of linear equations used to solve for weights when including in the basis for approximation polynomials up to order $m$ and polynomials up to order $m+\mu$.

\subsection{Weight Computation}

Application of $\mathcal{L}_{k}$ to the interpolant written in the cardinal basis reveals
\begin{align}
\mathcal{L}_{k}s_{k,n,m}[f] = \sum\limits_{i=1}^{n}\left(\mathcal{L}_{k}\psi_{k,n,m,i}\right)f_{k,n,i}
\end{align}
Defining
\begin{align}
\mathbf{w}_{k,n,m}=\left(\mathcal{L}_{k}\psi_{k,n,m,i}\right)\nonumber
\end{align}
the approximate operation amounts to an inner product between a vector containing values of the function to which the operation is being applied and a vector of ``weights" containing the operator $\mathcal{L}_{k}$ applied to the cardinal basis elements. This is a standard method for determining approximation weights, e.g., as in the case of finite difference or pseudospectral approximations of derivatives or for Newton-Cotes quadrature weights for approximating definite integrals.  In any case, these weights are independent of the function $f$.  However, construction of the cardinal basis to determine these weights is unnecessary, and instead an equivalent process determines the weight set $\mathbf{w}_{k,n,m}$ utilizing \eqref{eq:cardinalrelationship} as
\begin{align}
S_{k,n,m}\left[\begin{array}{c}\mathbf{w}_{k,n,m}\\\mathbf{v}_{k,n,m}\end{array}\right]=\left[\begin{array}{c}\mathcal{L}\boldsymbol{\Phi}_{k,n} \\ \mathcal{L}\boldsymbol{\Pi}_{k,m}\end{array}\right].\label{eq:weight_ls}
\end{align}
Here $\mathcal{L}_{k}\boldsymbol{\Phi}_{k,n}$ and $\mathcal{L}_{k}\boldsymbol{\Pi}_{k,m}$ are to be understood as the vectors obtained by entry-wise application of $\mathcal{L}_{k}$ to the entries of $\boldsymbol{\Phi}_{k,n}$ and $\boldsymbol\Pi_{k,m}$, respectively.

\subsection{Reduced Computational Cost for the Error Estimate} \label{sec:Computational_Cost}

Given that $S_{k,n,m}$ is the upper left block of $S_{k,n,m+\mu}$, computation of the error estimate can be completed more efficiently than solving two full systems of linear equations.  Block matrix inversion of $S_{k,n,m+\mu}$ yields that if the solution to \eqref{eq:weight_ls} is available, then
\begin{align}
    \mathbf{w}_{k,n,m+\mu}=&\mathbf{w}_{k,n,m}-\Lambda_{k,n,m}(\tilde{P}_{k,n,m+\mu}^{T}\Lambda_{k,n,m})^{-1}\left(\mathcal{L}\tilde{\boldsymbol{\Pi}}_{k,m+\mu}-\tilde{P}_{k,n,m+\mu}^{T}\mathbf{w}_{k,n,m}\right) \label{eq:efficient_solve}
\end{align}
The cost of determining $\mathbf{w}_{k,n,m}$ is $O((n+M_{d,m})^{3})$, while the dominant part of the cost of determining $\mathbf{w}_{k,n,m+\mu}$ is only a further $O(n(M_{d,M+\mu}-M_{d,m})^{2})$ incurred for the multiplication $\tilde{P}_{k,n,m+\mu}^{T}\Lambda_{k,n,m}$.  Now, to guarantee unique values of $\mathbf{w}_{k,n,m+\mu}$ it is necessary that $n\geq M_{d,m+\mu}$.  The expressions for the terms in these costs can be better understood by noting
\begin{align}
    M_{d,m+\mu}=& M_{d,m}\frac{(m+\mu+d)!}{(m+\mu)!}\frac{m!}{(m+d)!},\nonumber
\end{align}
so that when $n\approx M_{d,m+\mu}$ the cost behaves as
\begin{align}
O\left(M_{d,m}^{3}\frac{(m+\mu+d)!}{(m+\mu)!}\frac{m!}{(m+d)!}\left(\frac{(m+\mu+d)!}{(m+\mu)!}\frac{m!}{(m+d)!}-1\right)^{2}\right)\nonumber
\end{align}
and there is computational efficiency compared to solving
\begin{align}
S_{k,n,m+\mu}\mathbf{w}_{k,n,m+\mu}=\left[\begin{array}{c}\mathcal{L}\boldsymbol{\Phi}_{k,n} \\ \mathcal{L}\boldsymbol{\Pi}_{k,m+\mu}\end{array}\right].\nonumber
\end{align}
Define $\tau_{d,m}$ and $\tau_{d,m+\mu}$ be the times it takes to solve \eqref{eq:weight_ls} and
\begin{align}
S_{k,n,m+\mu}\left[\begin{array}{c}\mathbf{w}_{k,n,m+\mu}\\\mathbf{v}_{k,n,m+\mu}\end{array}\right]=\left[\begin{array}{c}\mathcal{L}\boldsymbol{\Phi}_{k,n} \\ \mathcal{L}\boldsymbol{\Pi}_{k,m+\mu}\end{array}\right],\label{eq:weight_ls_next}
\end{align}
respectively.  Further, let $\tau_{d,m+\mu}^{'}$  be the time it takes to solve \eqref{eq:efficient_solve} for $\mathbf{w}_{k,n,m+\mu}$.  Figure \ref{fig:Timing_Stuff} illustrates $\tau_{d,m+\mu}/\tau_{d,m}$ in the first row and $\tau_{d,m+\mu}^{'}/\tau_{d,m}$ in the second for various choices of $m$ and $\mu$ and for $d=1,2,3$.  These frames were generated by averaging the elapsed computation times of 1000 instances of solving each of these systems of equations for each choice of $m$, $\mu$ and $d$. Comparing the contour plots in the first and second rows demonstrates that in every case there is improvement in computational efficiency when solving \eqref{eq:efficient_solve} instead of the full system of linear equations and that these improvements become more apparent as $d$ increases. Note that all computations presented here were performed on a workstation with two Intel$^\circledR$ Xeon$^\circledR$ CPU E5-2697 v3 processors, each running at 2.60GHz, and 256 GB of memory running MATLAB R2022b.
\begin{figure}
\begin{center}
\includegraphics[width=\linewidth]{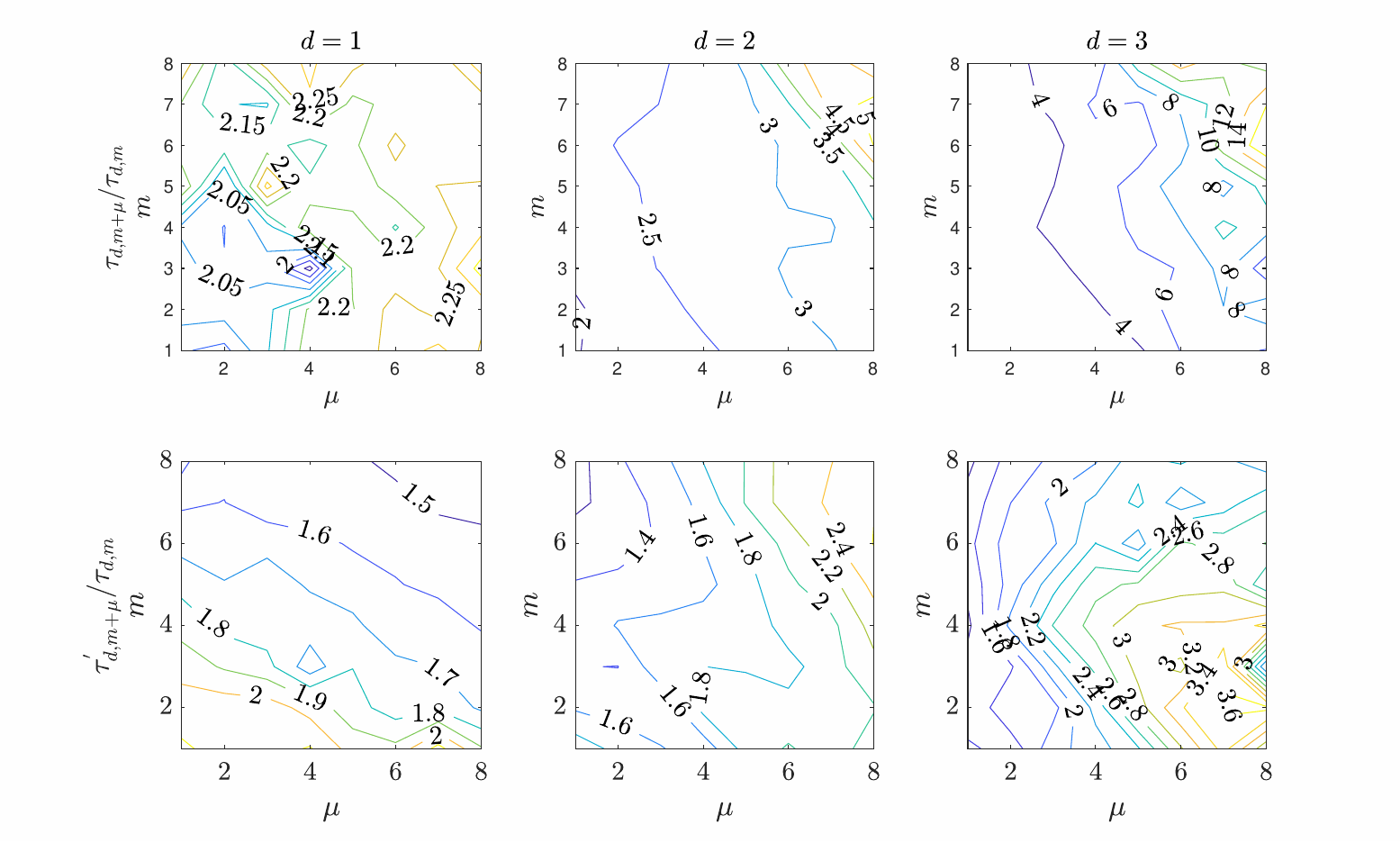}
\end{center}
\caption{Demonstrations of how many times longer it takes to solve \eqref{eq:weight_ls_next} (with average computation time $\tau_{d,m+\mu}$) and \eqref{eq:efficient_solve} (with average computation time $\tau_{d,m+\mu}^{'}$) relative to the time it takes to solve \eqref{eq:weight_ls} (with average computation time $\tau_{d,m}$).  These frames were generated by averaging the elapsed computation times of 1000 instances of solving each of these systems of equations for each choice of $m$, $\mu$ and $d$.}
\label{fig:Timing_Stuff}
\end{figure}

\subsection{A Naive Algorithm for Adaptive Kernel Based Approximation} \label{sec:Naive_Algorithm}

The focus of this paper is the development of an efficient error estimate for local ``$h$-adaptive" kernel based methods.  Such approaches to adaptation add new points to the set $\mathcal{S}$ in a neighborhood of those points in $\mathcal{X}_{0}$ where the error estimate is found to be too great.  This has the effect of locally decreasing the size of $h_{k,n}$.  With the estimate available, there are many possibilities for choosing the locations of these new points.  The algorithm can be generically described in the following steps:

    \begin{algorithmic}[1]
    \State Given a set of $N^{(0)}$ nodes, $\mathcal{S}^{(0)}$, a set of $K^{(0)}$ points, $\mathcal{X}_{0}^{(0)}=\{\mathbf{x}_{k,0}^{(0)}\}_{k=1}^{K^{(0)}}$, a desired tolerance, $\varepsilon$, and a maximum number of refinement levels, $l_{\mbox{max}}$. (And, if necessary, a tesselation $T^{(l)}$ of $S^{(l)}$.)
    \State Set $l=0$.
    \While{$l\leq l_{\mbox{max}}$}
        \State Set $\mathcal{R}^{(l)}=\emptyset$
        \For{$k\in\{1,2\ldots,K^{(l)}\}$}
        \State Determine the $n$ points in $S^{(l)}$ nearest to $\mathbf{x}_{k,0}^{(l)}$. Call this set of points $\mathcal{N}_{k,n}^{(l)}$.
        \If{$l=0$}
        \State Set $\mathcal{R}^{(l)}=\mathcal{R}^{(l)}\bigcup k$
        \Else
        \If{there is an index $i$ such that $\mathbf{x}_{k,0}^{(l)}=\mathbf{x}_{i,0}^{(l-1)}$}
        \If{$\mathcal{N}_{k,n}^{(l)}\setminus\mathcal{N}_{i,n}^{(l-1)}\neq\emptyset$}
        \State Set $\mathcal{R}^{(l)}=\mathcal{R}^{(l)}\bigcup k$
        \EndIf
        \Else
        \State Set $\mathcal{R}^{(l)}=\mathcal{R}^{(l)}\bigcup k$
        \EndIf
        \EndIf
        \EndFor
        \If{$\mathcal{R}^{(l)}=\emptyset$}
        \State Break and return $\mathcal{L}_{k}^{(l)}s_{k,n,m}$ as the local approximation of $\mathcal{L}_{k}^{(l)}f$ associated with each $\mathbf{x}_{k,0}^{(l)}\in\mathcal{X}_{0}^{(l)}$
        \EndIf
        \State Set $\mathcal{S}^{(l+1)}=\mathcal{S}^{(l)}$ and $\mathcal{X}_{0}^{(l+1)}=\mathcal{X}_{0}^{(l)}$.  (And, if necessary, set $T^{(l+1)}=T^{(l)}$.)
        \For{$k\in\mathcal{R}^{(l)}$}
        \State Compute $\mathcal{L}_{k}^{(l)}s_{k,n,m}$ and $\mathcal{L}_{k}^{(l)}s_{k,n,m}$ utilizing the nodes in $\mathcal{N}_{k,n}^{(l)}$.
        \If{$\lVert\mathcal{L}_{k}^{(l)}s_{k,n,m}-\mathcal{L}_{k}^{(l)}s_{k,n,m+\mu}\lVert_{2}>\varepsilon$}
        \State Add new nodes to the set $\mathcal{S}^{(l+1)}$ in a neighborhood of $\mathbf{x}_{k,0}^{(l)}$ and update the set $\mathcal{X}_{0}^{(l+1)}$ (and, if necessary, update $T^{(l+1)}$).
        \EndIf
        \EndFor
        \State Set $K^{(l+1)}=|\mathcal{X}_{0}^{(l+1)}|$, then increment $l$ by 1.
    \EndWhile
    \end{algorithmic}

Notice that in this implementation, at each level $l$, the approximation $\mathcal{L}_{k}^{(l)}s_{k,n,m}$ is updated for each $\mathbf{x}_{k,0}^{(l)}\in\mathcal{X}_{0}^{(l)}$ that has a set of nearest neighbors that has changed with the addition of new points to the set $\mathcal{S}^{(l)}$.  This is not necessary, and it is possible to update the approximation only for those points $\mathbf{x}_{k,0}^{(l)}$ for which the error estimate in line 22 violates the specified tolerance, reducing the computational cost.  Some computational experiments were performed with this method that reduces the cost; however, the best performance is achieved by updating the approximation for all points whose nearest neighbors have changed.

In all of the results presented here, the set $\mathcal{S}^{(0)}$ begins with $10^{d}$ equally spaced points in $[-1,1]^{d}$. Implementation of line 21 of the algorithm then depends upon the operation $\mathcal{L}$.  First, for approximating definite integrals at level $l$ of the algorithm note that
\begin{align}
    \int\limits_{\Omega}f(\mathbf{x})d\mathbf{x} = \sum\limits_{k=1}^{K^{(l)}}\int\limits_{\omega_{k}^{(l)}}f(\mathbf{x})d\mathbf{x},\nonumber
\end{align}
where $\omega_{k}^{(l)}$ is a portion of $\Omega$ associated with a simplex $t_{k}^{(l)}$, with midpoint $\mathbf{x}_{k,0}^{(l)}$, in a set of simplices constructed on the set $\mathcal{S}^{(l)}$.  A set of new nodes, that may contain the point $\mathbf{x}_{k,0}^{(l)}$, is added to $\mathcal{S}^{(l+1)}$. In this implementation $\mathbf{x}_{k,0}^{(l)}$, and the midpoints of the facets of the simplices (e.g., edges of triangles when $d=2$) are also added. Then $\mathbf{x}_{k,0}^{(l)}$ and $t_{k}^{(l)}$ are removed from $\mathcal{X}_{0}^{(l+1)}$ and $T^{(l+1)}$, respectively.  To the set $T^{(l+1)}$ are added the simplices in a tesselation of the set of points containing the vertices of $t_{k}^{(l)}$ and the points that have been added to $\mathcal{S}^{(l)}$.  The midpoints of these new simplices are added to the set $\mathcal{X}_{0}^{(l+1)}$.

On the other hand, for approximating a derivative when $d=1$, let $\mathbf{x}_{k,0,1}^{(l)}$ and $\mathbf{x}_{k,0,2}^{(l)}$ be the two points in $\mathcal{N}_{k,n}^{(l)}$ closest to $\mathbf{x}_{k,0}^{(l)}$. The two points $(\mathbf{x}_{k,0}^{(l)}+\mathbf{x}_{k,0,1}^{(l)})/2$ and $(\mathbf{x}_{k,0}^{(l)}+\mathbf{x}_{k,0,2}^{(l)})/2$ are added to both $\mathcal{S}^{(l+1)}$ and $\mathcal{X}_{0}^{(l+1)}$, while nothing is removed from either set.  When $d=2$, let $h^{(0)}=2/9$ (the initial horizontal/vertical spacing between adjacent nodes).  At level $l$ the points $\mathbf{x}_{k,0}^{(l)}\pm h^{(0)}/2^{l+1}\left[\begin{array}{cc}1 & 0\end{array}\right]^{T}$, $\mathbf{x}_{k,0}^{(l)}\pm h^{(0)}/2^{l+1}\left[\begin{array}{cc}0 & 1\end{array}\right]^{T}$, $\mathbf{x}_{k,0}^{(l)}\pm (\sqrt{2}/2)h^{(0)}/2^{l+1}\left[\begin{array}{cc}1 & 1\end{array}\right]^{T}$,$\mathbf{x}_{k,0}^{(l)}\pm (\sqrt{2}/2)h^{(0)}/2^{l+1}\left[\begin{array}{cc}1 & -1\end{array}\right]^{T}$, $\mathbf{x}_{k,0}^{(l)}\pm (\sqrt{2}/2)h^{(0)}/2^{l+1}\left[\begin{array}{cc}-1 & -1\end{array}\right]^{T}$, and $\mathbf{x}_{k,0}^{(l)}\pm (\sqrt{2}/2)h^{(0)}/2^{l+1}\left[\begin{array}{cc}-1 & 1\end{array}\right]^{T}$ are added to the both $\mathcal{S}^{(l+1)}$ and $\mathcal{X}_{0}^{(l+1)}$, while nothing is removed from either set.

These strategies for adding nodes are certainly not the only ones available and may not be optimal; however, they perform well enough to illustrate the performance of even naively implemented algorithms based on the error estimate.

\section{Computational Experiments} \label{sec:Experiments}

Demonstrations of the agreement between the error estimate and actual error are provided in the following section when utilizing the algorithm described in section \ref{sec:Naive_Algorithm}.  These demonstrations explore two common linear operations in both $d=1$ and $d=2$ for two test functions with localized features that require significant refinement to be resolved.

\subsection{Test Functions and Kernel Selection for Computational Experiments}

Computational experiments were performed for $d=1$ and $d=2$ using the test functions
\begin{align}
    f_{1}(\mathbf{x}) = \sum\limits_{i=1}^{2d}\frac{1}{1+a\left\lVert\mathbf{x}-\mathbf{y}_{i}\right\rVert_{2}^{2}}\nonumber
\end{align}
and
\begin{align}
    f_{2}(\mathbf{x}) = \sum\limits_{i=1}^{2d}\mbox{e}^{-a\left\lVert\mathbf{x}-\mathbf{y}_{i}\right\rVert_{2}^{2}},\nonumber
\end{align}
both with $\mathbf{y}_{i}$ a randomly chosen shift in $(-1,1)^{d}$. Graphically these functions are similar in character with features that become more localized as $a$ increases.  However, the series expressions for the terms in $f_{1}$ and $f_{2}$ behave quite differently.  That is,
\begin{align}
\frac{1}{1+a\left\lVert\mathbf{x}-\mathbf{y}\right\rVert_{2}^{2}}=& \sum\limits_{j=0}^{\infty}(-1)^{j}a^{j}\left(\left\lVert\mathbf{x}-\mathbf{y}\right\rVert_{2}^{2}\right)^{j}=\sum\limits_{j=0}^{\infty}\sum\limits_{|\boldsymbol\beta|=j}\frac{j!(-1)^{j}a^{j}}{\boldsymbol\beta!}\left(\mathbf{x}-\mathbf{y}\right)^{2\boldsymbol\beta}\nonumber
\end{align}
has radius of convergence $\left\lVert\mathbf{x}-\mathbf{y}\right\rVert_{2}<1/\sqrt{a}$ with terms that grow for all $j$ when outside the radius of convergence, and
\begin{align}
    \mbox{e}^{-a\left\lVert\mathbf{x}-\mathbf{y}\right\rVert_{2}^{2}} =& \sum\limits_{j=0}^{\infty}(-1)^{j}\frac{a^{j}}{j!}\left(\left\lVert\mathbf{x}-\mathbf{y}\right\rVert_{2}^{2}\right)^{j}= \sum\limits_{j=0}^{\infty}\sum\limits_{|\boldsymbol\beta|=j}\frac{(-1)^{j}a^{j}}{\boldsymbol\beta!}\left(\mathbf{x}-\mathbf{y}\right)^{2\boldsymbol\beta}\nonumber
\end{align}
has infinite radius of convergence and terms that grow only until finite $j$ (for each fixed $\left\lVert\mathbf{x}-\mathbf{y}\right\rVert_{2}$).

While there are many options to choose from for the kernel used in the local approximations, the results here considered the use of $\varphi(r)=r^3$.

\subsection{Results in 1-Dimension}

To illustrate the performance of the algorithm described in section \ref{sec:Naive_Algorithm} for $d=1$ it was applied to approximate the action of both
\begin{align}
    \mathcal{L}f = \int\limits_{-1}^{1}f(x)dx\mbox{ }\bigg(\mbox{i.e., }\mathcal{L}_{k}f=\int\limits_{\omega_{k}}f(x)dx\bigg)\nonumber
\end{align}
and
\begin{align}
    \mathcal{L}f = \frac{d}{dx}f(x)\mbox{, }x\in[-1,1]\mbox{ }\bigg(\mbox{i.e., }\mathcal{L}_{k}f=\frac{d}{dx}f(x)\bigg\lvert_{x=x_{k,0}}\bigg).\nonumber
\end{align}
Consider the choice of $a=1000$, $N^{(0)}=10$, $m=1$, and $\mu=2$.  Figure \ref{fig:Single_Adaptive_Quadrature_One_D_Example} illustrates a single example of adaptive quadrature for evaluating the integral of $f_{2}(x)$ over the interval $[-1,1]$.  The top frame illustrates $f_{2}$ and the locations of the nodes required to achieve a tolerance of $\varepsilon=10^{-5}$, while the middle frame shows the node spacing, corresponding to the widths of the subintervals being integrated over.  Both the top and middle frames indicate that the algorithm is adaptively placing points with increased density where the function is changing rapidly (i.e., the derivative is larger).  This is the expected behavior.  The bottom frame then illustrates that the absolute error estimate and actual absolute error are in agreement and both meet the prescribed tolerance with $N=93$ nodes on $[-1,1]$.

\begin{figure}
\begin{center}
\includegraphics[width=\linewidth]{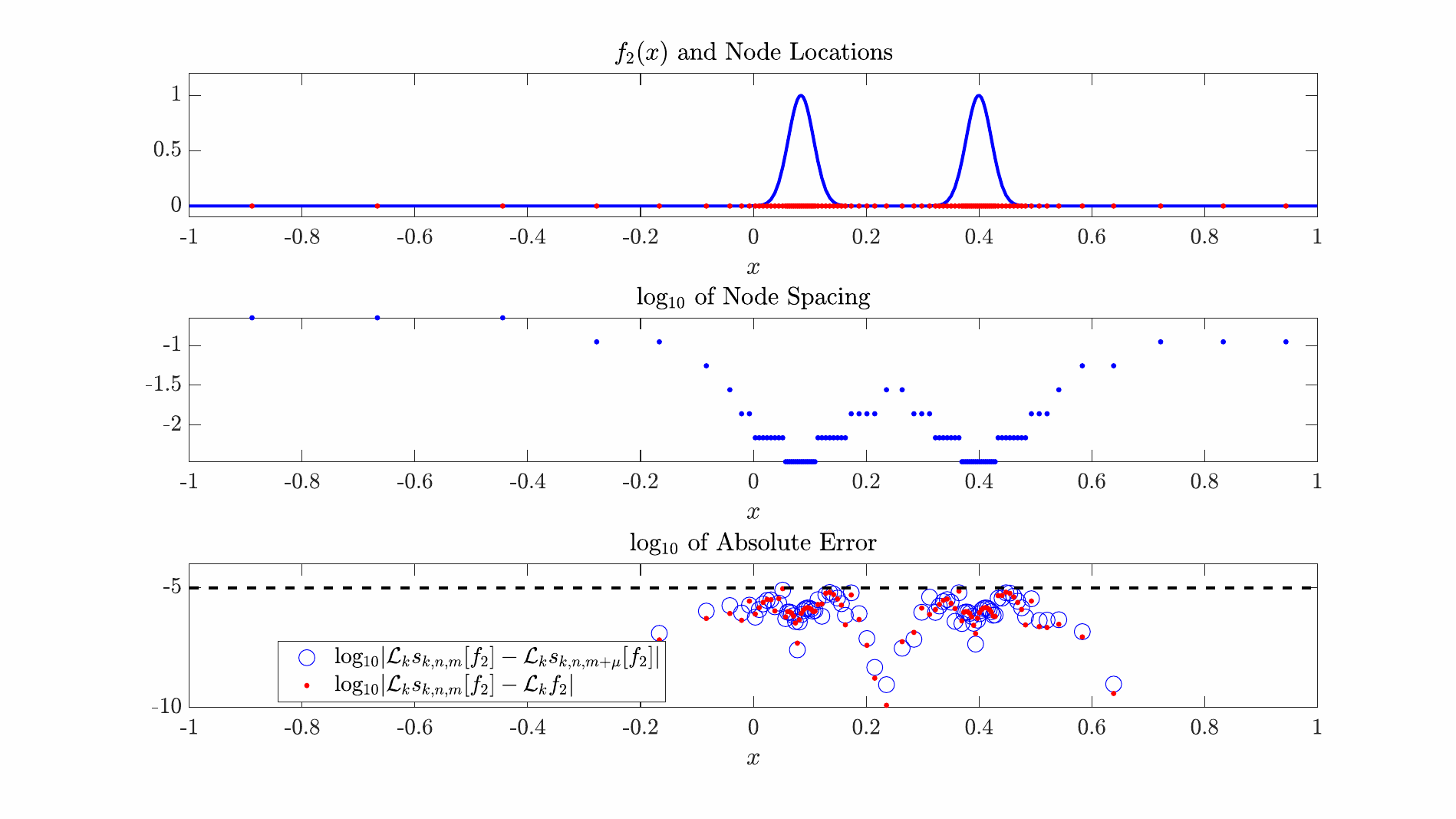}
\end{center}
\caption{An example of adaptive approximate numerical quadrature of $f_{2}$ for $x\in[-1,1]$ with $a=1000$, $m=1$, $\mu=2$, $\varepsilon=10^{-5}$, $y_{1}=0.084435845510910$, $y_{2}=0.399782649098896$, and $N=93$. The top frame indicates the function and node locations.  The middle frame shows the node spacing, which corresponds to the widths of the intervals being integrated over.  The bottom frame illustrates agreement of the absolute error estimate and actual absolute error and satisfaction of the prescribed tolerance. Node locations where no marker for the error estimate or actual error appears indicate estimates and actual errors that fall well below the vertical limits of the plot. }
\label{fig:Single_Adaptive_Quadrature_One_D_Example}
\end{figure}

Similarly, figure \ref{fig:Single_Adaptive_Differentiation_One_D_Example} shows a single example of adaptive differentiation of $f_{2}(x)$ on the interval $[-1,1]$.  Since the error estimate is utilized to approximate the \textit{absolute} error in the derivative, more points are required to achieve a specific tolerance in comparison to adaptive quadrature.  This is because the derivatives of both $f_{1}(x)$ and $f_{2}(x)$ are an order of magnitude larger than the maximum value of their integrals over $[-1,1]$.  In fact, the maximum of the derivative for any choice of $a$ can be as large as $2(3\sqrt{3a}/8)$ for $f_{1}$ and $2(\sqrt{2a\mbox{e}^{-1}})$ for $f_{2}$.  For this reason, in this example the algorithm is applied to computing the derivative of $f_{2}(x)$ with $\varepsilon = 10^{-2}$ so that individual nodes are visually distinguishable except where the nodes are adaptively placed with the greatest density.  Still, the top and middle frames indicate that the algorithm is again adaptively placing points with increased density where the derivative is large and the bottom frame illustrates that the error estimate and actual error are in agreement and meet the prescribed tolerance, now with $N=3093$ nodes on $[-1,1]$.

\begin{figure}
\begin{center}
\includegraphics[width=\linewidth]{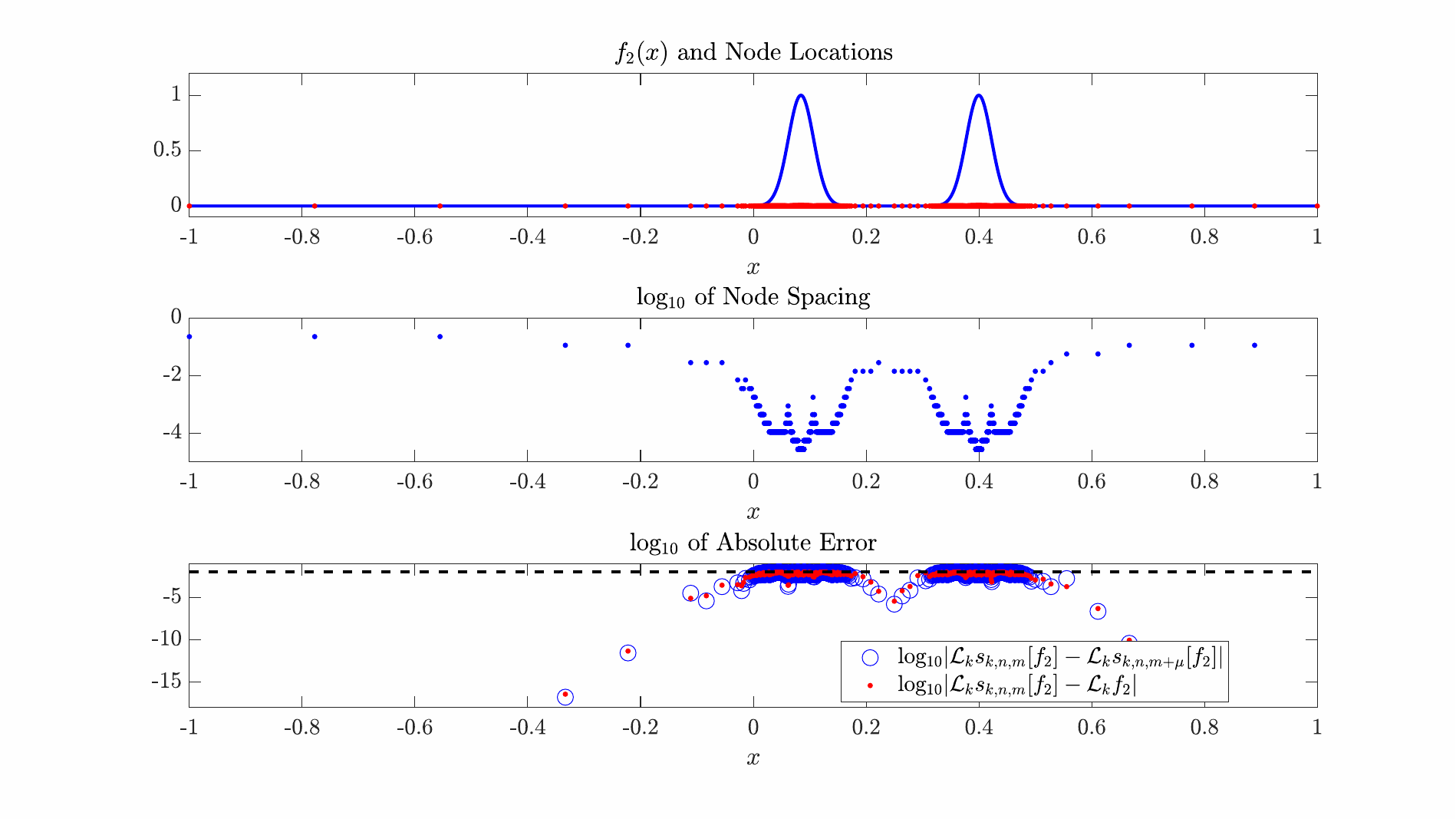}
\end{center}
\caption{An example of approximation differentiation of $f_{2}$ for $x\in[-1,1]$ with $a=1000$, $m=1$, $\mu=2$, $\varepsilon=10^{-5}$, $y_{1}=0.084435845510910$, $y_{2}=0.399782649098896$, and $N=3093$. The top frame indicates the function and node locations.  The middle frame shows the node spacing.  The bottom frame illustrates agreement of the absolute error estimate and actual absolute error and satisfaction of the prescribed tolerance. Node locations where no marker for the error estimate or actual error appears indicate estimates and actual errors that fall well below the vertical limits of the plot. }
\label{fig:Single_Adaptive_Differentiation_One_D_Example}
\end{figure}

The choice of $\mu=2$ in these examples is motivated by an important observation and computational experiments.  First, for functions that exhibit even or odd symmetry about one of the points $x_{k,0}\in\mathcal{X}_{0}$ it is possible that $s_{k,n,m}[f](x)$ and $s_{k,n,m+1}[f](x)$ will account for exactly the same terms in the Taylor formula so that their difference may be small even when the actual error is large.  Second, figure \ref{fig:Muu_Comparision_Adaptive_Quadrature_One_D} demonstrates the number of nodes, $N$, required to locally reach various prescribed tolerances, $\varepsilon\in[10^{-7},10^{-4}]$, for $a=100$ and both $f=f_{1}$ (solid curves) and $f=f_{2}$ (dashed curves) for each value of $m=1,2,3,4$ and $\mu=2,3$.  There is no noticeable improvement, which would be indicated by fewer nodes being required, when increasing $\mu$ from $2$ to $3$, so that the discussion in section \ref{sec:Computational_Cost} indicates that $\mu=2$ should be preferred for the reduced computational expense.  Further computational experiments were performed with $\mu=1$ and $\mu>3$, all of which indicated that $\mu=2$ is a reasonable choice.

Figure \ref{fig:Muu_Comparision_Adaptive_Quadrature_One_D} also provides a comparison between the adaptive algorithm here applied to quadrature and the more familiar adaptive trapezoidal rule.  The implementation of the adaptive trapezoidal rule mirrors the presentation in \cite[Section 5.5]{atkinson}.  The figure illustrates that even when the adaptive kernel method is of the same anticipated order as the adaptive trapezoidal rule (i.e., when $m=1$) the method here requires fewer nodes to achieve the desired tolerance.
\begin{figure}
\begin{center}
\includegraphics[width=0.75\linewidth]{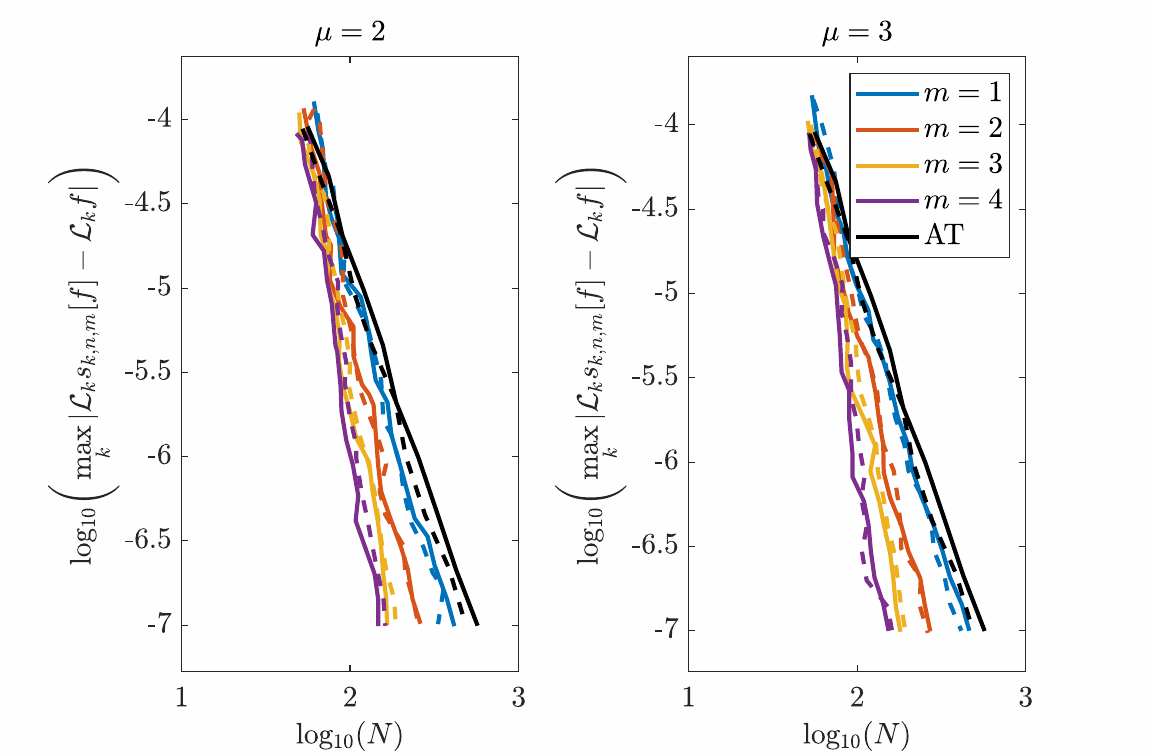}
\end{center}
\caption{Log base 10 of the average maximum absolute error versus log base 10 of the average number of nodes required to achieve that error for approximate definite integration of $f_{1}$ (solid curves) and $f_{2}$ (dashed curves) for $x\in[-1,1]$ with $a=100$.  The average is taken over 10 choices of $y_{1}$ and $y_{2}$ for each value of $\varepsilon$, corresponding closely to the error shown.  The left frame corresponds to a choice of $\mu=2$ while in the right frame $\mu=3$.  In both cases curves are shown for $m=1,2,3,4$ along with analogous results for an adaptive trapezoidal rule (AT in the legend) that utilizes the difference between Simpson's rule and trapezoidal rule as an error estimate.}
\label{fig:Muu_Comparision_Adaptive_Quadrature_One_D}
\end{figure}

The impact of the radius of convergence of and the growth of the terms in the Taylor formula on the required number of nodes for each test function was also assessed for both numerical quadrature and differentiation.  Both the radius of convergence and growth of the terms were controlled by varying $a$ (here $a\in[1,1000]$) and the number of required nodes to achieve specified tolerances ($\varepsilon\in[10^{-7},10^{-4}]$ for quadrature and $\varepsilon\in[10^{-6},10^{-3}]$ for differentiation) was recorded. The contours in each frame of figures \ref{fig:Adaptive_Quadrature_One_D_Muu_2} and \ref{fig:Adaptive_Differentiation_One_D_Muu_2} illustrate log base 10 of the number of nodes $N$ required for each choice of $a$ and $\varepsilon$, with each frame corresponding to a different value of $m=1,2,3,4$. Due to the memory requirements for storing all diagnostic data used in the analyses presented here, the algorithm was forced to terminate when $N>10^{5.5}$. This limit should not be necessary for more efficient implementations of the algorithm and here only impacted the results for numerical differentiation in the case of $m=1$.  The consequences of this limit are illustrated by the missing curves down and to the right of the curve indicating $N=10^{5}$ in the top left frame of figure \ref{fig:Adaptive_Differentiation_One_D_Muu_2}.  As expected, an increase in $a$, which leads to more localized features in both $f_{1}$ and $f_{2}$ and faster growth of their derivatives, necessitates an increase in $N$ to achieve the same tolerance $\varepsilon$.  Consistent with remarks \ref{rem:Approx_Order} and \ref{rem:estimate1} and figure \ref{fig:Muu_Comparision_Adaptive_Quadrature_One_D} the number of nodes required for each value of $a$ and $\varepsilon$ decreases with increasing $m$.  However, despite the finite and infinite radii of convergence of the Taylor formulas of $f_{1}$ and $f_{2}$, respectively, the algorithm performs similarly, demonstrating that it gracefully handles the growth in the terms of the series and finite radii of convergence.

\begin{figure}
\begin{center}
\includegraphics[width=0.75\linewidth]{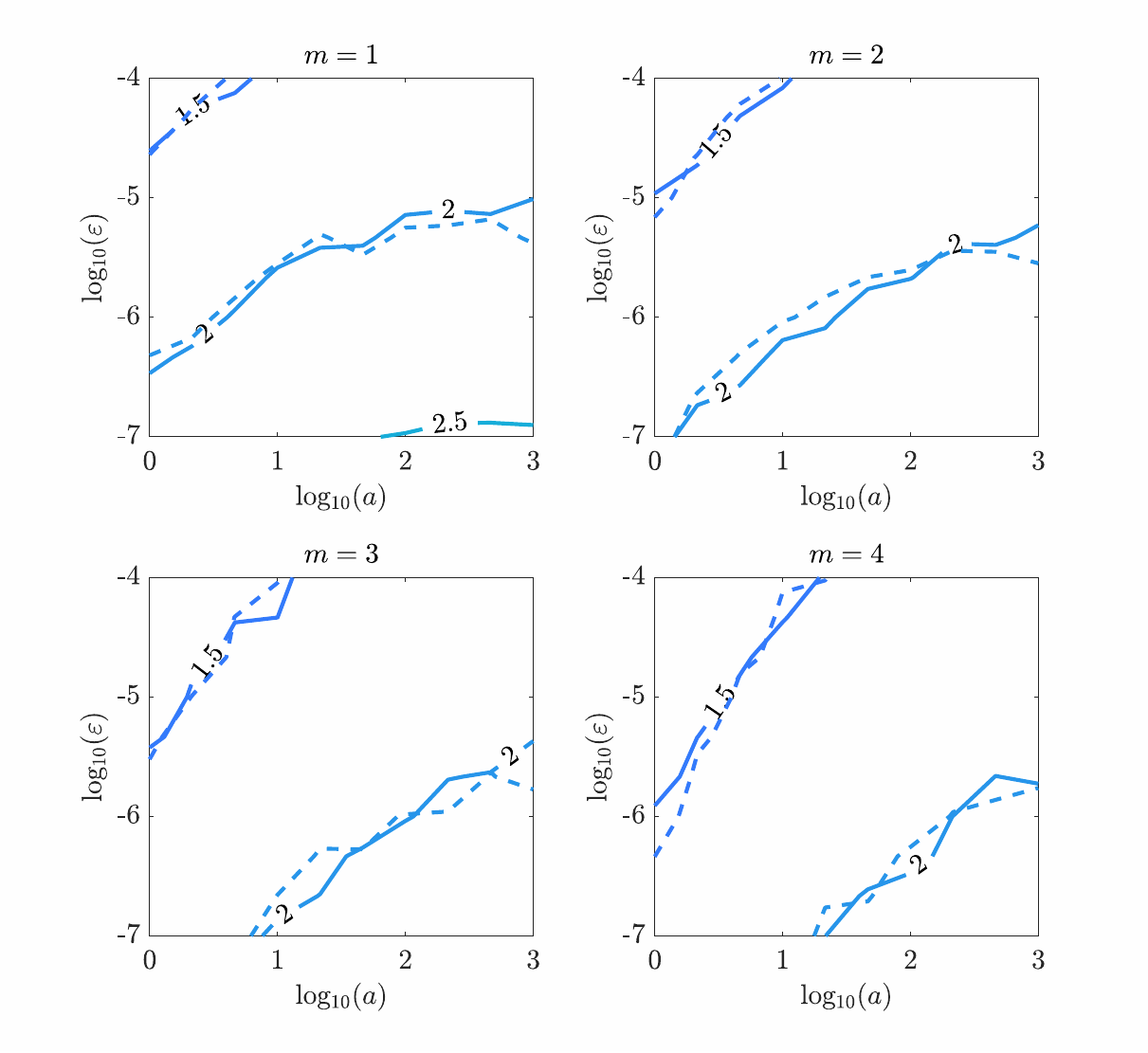}
\end{center}
\caption{An illustration of the number of nodes required to achieve an estimated error tolerance of $\varepsilon$ for approximate definite integration of $f_{1}$ (solid curves) and $f_{2}$ (dashed curves) for $x\in[-1,1]$ with $a\in[1,1000]$.  The contours indicate the log base 10 of the mean of the number of required nodes for ten random choices of $y_{1}$ and $y_{2}$ for each value of $a$ and $\varepsilon$.  In all cases $\mu=2$.}
\label{fig:Adaptive_Quadrature_One_D_Muu_2}
\end{figure}

\begin{figure}
\begin{center}
\includegraphics[width=0.75\linewidth]{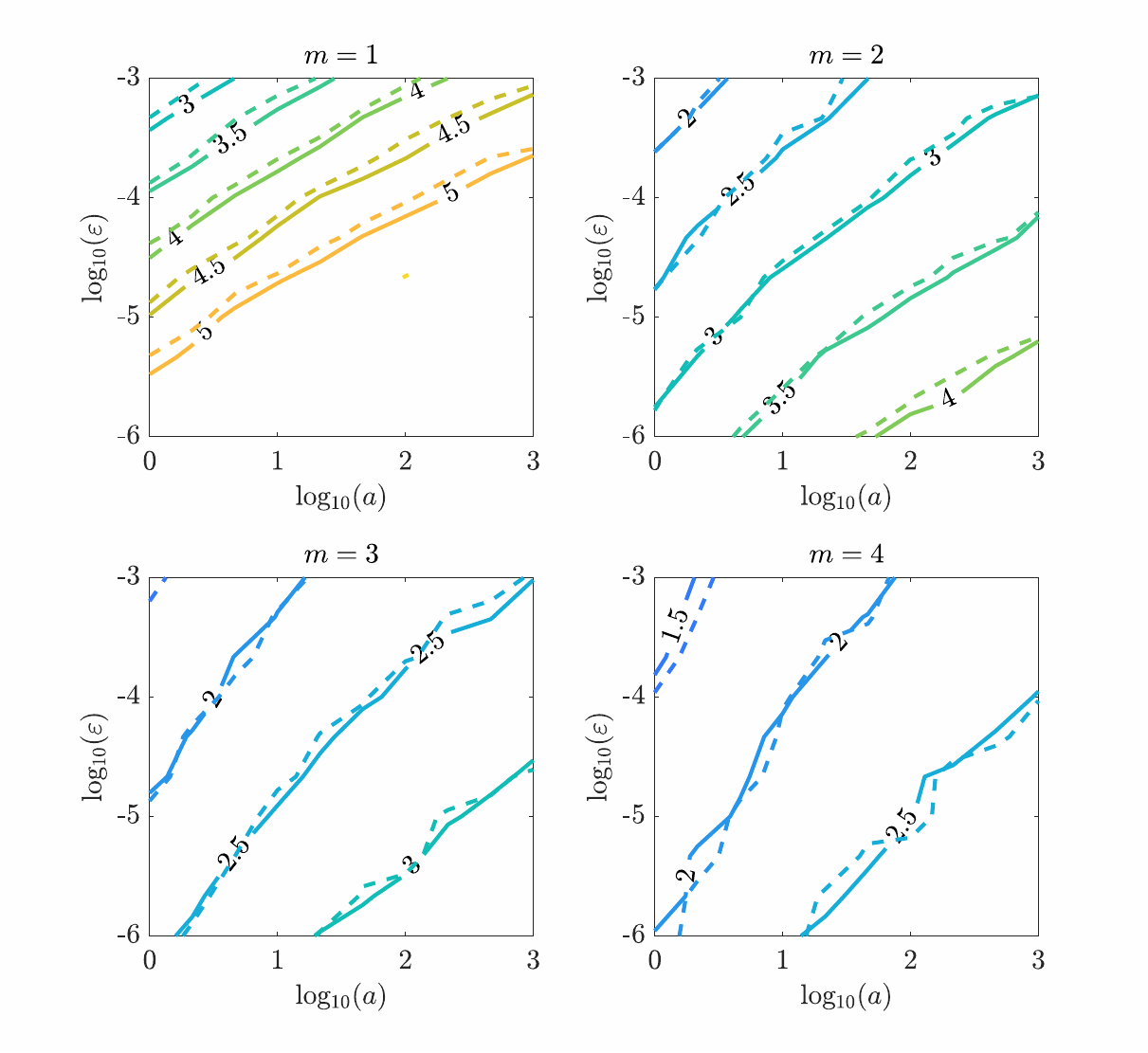}
\end{center}
\caption{An illustration of the number of nodes required to achieve an estimated error tolerance of $\varepsilon$ for approximate differentiation of $f_{1}$ (solid curves) and $f_{2}$ (dashed curves) for $x\in[-1,1]$ with $a\in[1,1000]$.  The contours indicate the log base 10 of the mean of the number of required nodes for ten random choices of $y_{1}$ and $y_{2}$ for each value of $a$ and $\varepsilon$.  In all cases $\mu=2$.}
\label{fig:Adaptive_Differentiation_One_D_Muu_2}
\end{figure}

\subsection{Examples in 2-Dimensions}

For $d=2$ computational experiments were performed for both
\begin{align}
    \mathcal{L}f=\int\limits_{\Omega}f(\mathbf{x})d\mathbf{x},\mbox{ }\Omega = [-1,1]^{2}\mbox{ }\bigg(\mbox{i.e., }\mathcal{L}_{k}f=\int\limits_{\omega_{k}}f(\mathbf{x})d\mathbf{x}\bigg)\nonumber
\end{align}
and
\begin{align}
    \mathcal{L}f = \nabla f,\mbox{ }\mathbf{x}\in[-1,1]^{2}\mbox{ }\bigg(\mbox{i.e., }\mathcal{L}_{k}f=\nabla f(\mathbf{x})\bigg\lvert_{\mathbf{x}=\mathbf{x}_{k,0}}\bigg).\nonumber
\end{align}

Consider the choice of $a=1000$, $N^{(0)}=100$, $m=4$, and $\mu=2$.  Figure \ref{fig:Single_Adaptive_Quadrature_Two_D_Example} illustrates a single example of adaptive quadrature for evaluating the integral of $f_{2}(\mathbf{x})$ over the domain $[-1,1]^{2}$ with $\varepsilon = 10^{-6}$.  The top left frame illustrates the magnitude of $f_{2}$, while the top right frame shows the node spacing, represented by the area of the triangles $\omega_{k}$.  The top right frame again indicates that the algorithm is adaptively placing points with increased density where the function is changing rapidly.  The bottom frames illustrate that the absolute error estimate and actual absolute error are in agreement with $N=2366$.

\begin{figure}
\begin{center}
\includegraphics[width=\linewidth]{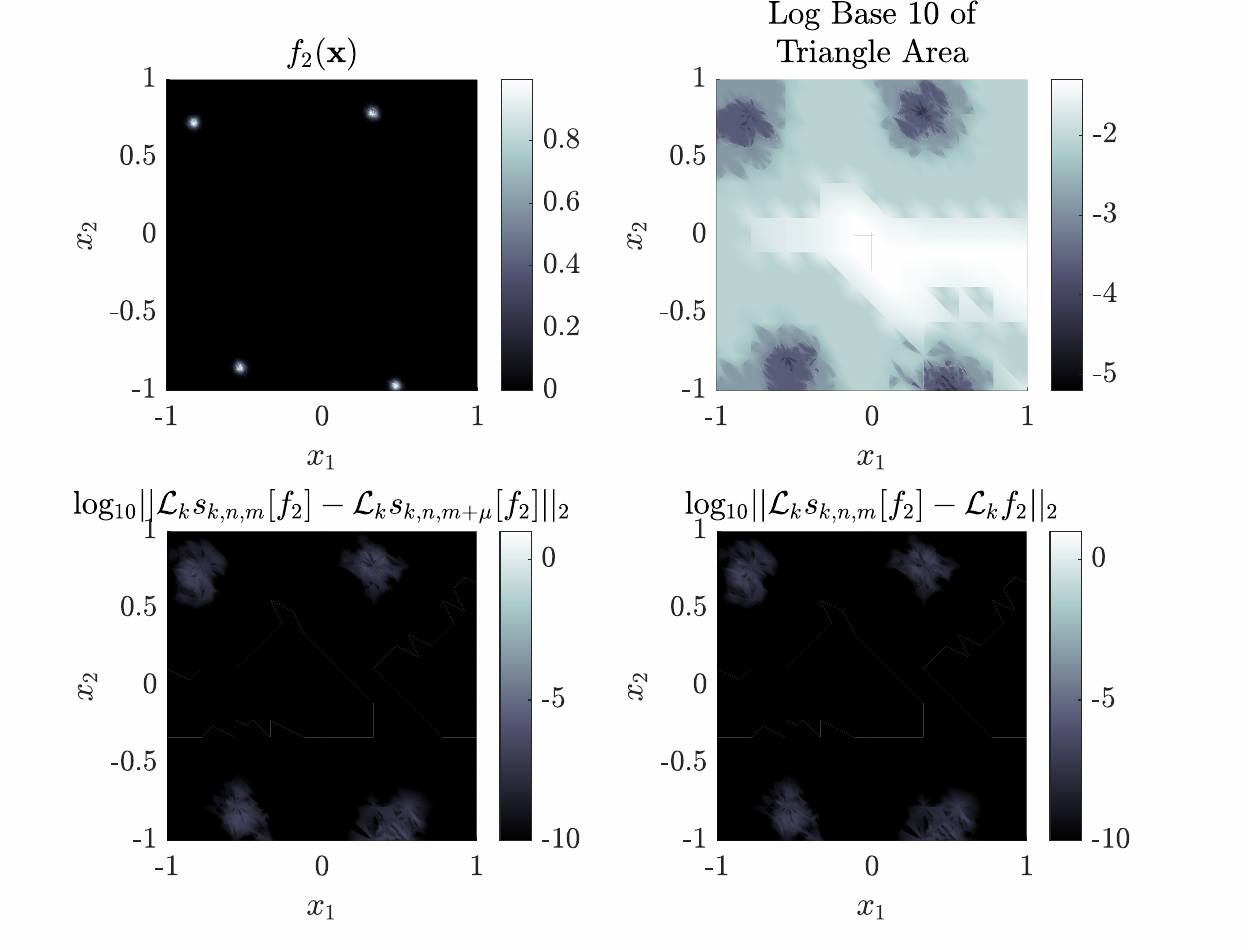}
\end{center}
\caption{An example of adaptive approximate numerical quadrature of $f=f_{2}$ for $x\in[-1,1]$ with $a=1000$, $m=4$, $\mu=2$, $n=M_{d,m+\mu}=28$, $\varepsilon=10^{-6}$, $\mathbf{y}_{1}=\left[\begin{array}{cc}0.322471807186779  & 0.784739294760742\end{array}\right]^{T}$, $\mathbf{y}_{2}=\left[\begin{array}{cc}0.471357153710612 & -0.964237266730882\end{array}\right]^{T}$, $\mathbf{y}_{3}=\left[\begin{array}{cc}-0.824125584316469 &  0.721758033391102\end{array}\right]^{T}$, $\mathbf{y}_{4}=\left[\begin{array}{cc}-0.526514007034680 & -0.847278799561768\end{array}\right]^{T}$ and $N=2366$. The top left frame illustrates the function the magnitude of the function.  The top right frame shows the node spacing measured by the areas of the triangles $\omega_{k}$.  The bottom frames illustrate agreement of the absolute error estimate and actual absolute error and satisfaction of the prescribed tolerance.}
\label{fig:Single_Adaptive_Quadrature_Two_D_Example}
\end{figure}

Similarly, figure \ref{fig:Single_Adaptive_Differentiation_Two_D_Example} shows a single example of adaptive differentiation of $f_{2}(\mathbf{x})$ on the domain $[-1,1]^{2}$.  Since the derivative can again be large, in this example the algorithm is applied to computing the derivative of $f_{2}$ with $\varepsilon = 10^{-2}$.  Still, the top right frame indicates that the algorithm is again adaptively placing points with increased density where the derivative is large and the bottom frames illustrate that the error estimate and actual error are in agreement and meet the prescribed tolerance, now with $N=14852$ nodes on $[-1,1]^{2}$.

\begin{figure}
\begin{center}
\includegraphics[width=\linewidth]{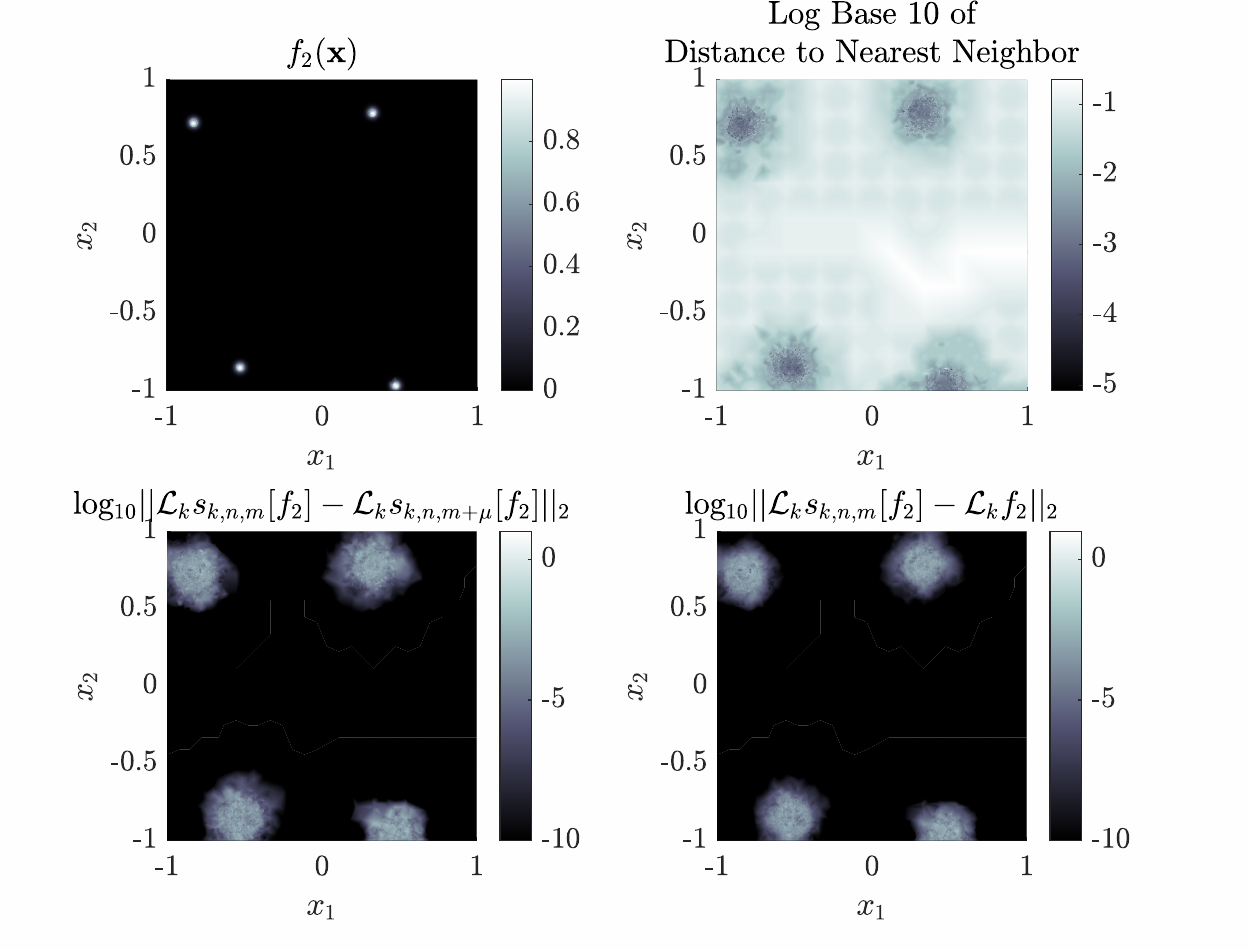}
\end{center}
\caption{An example of approximation differentiation of $f=f_{2}$ for $\mathbf{x}\in[-1,1]^{2}$ with $a=1000$, $m=4$, $\mu=2$, $n=M_{d,m+\mu}=28$ $\varepsilon=10^{-5}$, $\mathbf{y}_{1}=\left[\begin{array}{cc}0.322471807186779  & 0.784739294760742\end{array}\right]^{T}$, $\mathbf{y}_{2}=\left[\begin{array}{cc}0.471357153710612 & -0.964237266730882\end{array}\right]^{T}$, $\mathbf{y}_{3}=\left[\begin{array}{cc}-0.824125584316469 &  0.721758033391102\end{array}\right]^{T}$, $\mathbf{y}_{4}=\left[\begin{array}{cc}-0.526514007034680 & -0.847278799561768\end{array}\right]^{T}$, and $N=14852$. The top left frame illustrates the function the magnitude of the function.  The top right frame shows the node spacing measured by the distance of each node to its nearest neighbor.  The bottom frames illustrate agreement of the absolute error estimate and actual absolute error and satisfaction of the prescribed tolerance.}
\label{fig:Single_Adaptive_Differentiation_Two_D_Example}
\end{figure}

Again, the choice of $\mu=2$ in these examples is motivated by computational experiments.  Figure \ref{fig:Muu_Comparision_Adaptive_Quadrature_Two_D} is analogous to the presentation in figure \ref{fig:Muu_Comparision_Adaptive_Quadrature_One_D}, only now for approximate definite integration over $[-1,1]^{2}$.  There is again no noticeable improvement when increasing $\mu$ from $2$ to $3$, and $\mu=2$ is likewise preferred for the reduced computational expense.  However, it is clear in these examples that the algorithm performs better for a function whose Taylor series has infinite radius of convergence.
\begin{figure}
\begin{center}
\includegraphics[width=0.75\linewidth]{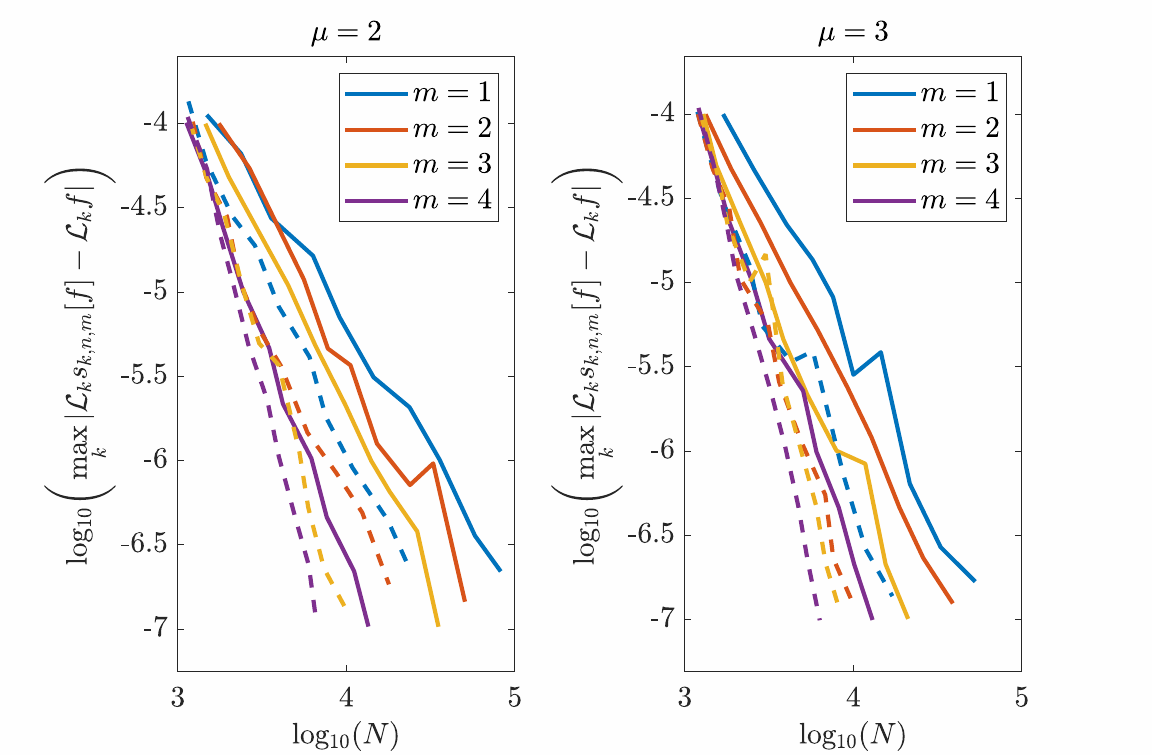}
\end{center}
\caption{Log base 10 of the average maximum absolute error versus log base 10 of the average number of nodes required to achieve that error for approximate definite integration of $f_{1}$ (solid curves) and $f_{2}$ (dashed curves) for $\mathbf{x}\in[-1,1]^{2}$ with $a=100$.  The average is taken over 10 choices of $\mathbf{y}_{1}$, $\mathbf{y}_{2}$, $\mathbf{y}_{3}$ and $\mathbf{y}_{4}$ for each value of $\varepsilon$.  Here $\varepsilon$ corresponds closely to the error shown.  The left frame corresponds to a choice of $\mu=2$ while in the right frame $\mu=3$.  In both cases curves are shown for $m=1,2,3,4$.}
\label{fig:Muu_Comparision_Adaptive_Quadrature_Two_D}
\end{figure}

\section{Conclusions} \label{sec:Conclusions}

This article presented a novel approach to constructing approximations to the action of linear operators that locally adapt the spacing of the discrete point set to achieve a prescribed error tolerance.  The approach described here utilized kernel methods similar to RBF-FD to overcome the difficulties experienced with an analogous use of polynomial interpolation in the presence of scattered nodes, particularly when $d>1$.   Computational experiments have shown that the estimate and actual absolute error are in agreement and that the estimate can be successfully used to indicate where refinement of the discrete node set is required.  

This study is not exhaustive.  In particular, there is significant opportunity to explore node placement strategies where the error estimate indicates the need for refinement.  Further, the choice to refine at every point where the estimate is computed that has a change in its nearest neighbors from one level to the next is likely unnecessary.  Therefore, approaches to refining only where the error indicator is larger than the prescribed tolerance are an important avenue for investigation.  

%\section{Declarations}
%
%There is nothing to declare.
%
\section*{Funding}

This research was supported in part by the Joint Directed Energy Transition Office Atmospheric Physics Modeling and Simulation Technical Area Working Group.
%
%\section*{Conflicts of interest/Competing interests}
%
%There are no conflicts of interest/competing interests to report.
%
%\section*{Availability of data and material}
%
%All data can be generated from the implementations of the algorithm will be made available at Mathworks Matlab Central File Exchange.
%
%\section*{Code availability}
%
%Implementations of the algorithm will be made available at Mathworks Matlab Central File Exchange.

\bibliographystyle{unsrt}      % basic style, author-year citations
\bibliography{references}   % name your BibTeX data base

\end{document}